\newtheorem{theorem}{Theorem}
\newtheorem{proposition}[theorem]{Proposition}%
\newtheorem{remark}{Remark}%
\newtheorem{definition}{Definition}%
\newtheorem{assum}[definition]{Assumption}
\newtheorem{cond}[definition]{Condition}
\newtheorem{lemma}[theorem]{Lemma}
\title{Tree-based conditional copula estimation}
\author{Francesco Bonacina$^{1,2}$, Olivier Lopez$^{3}$ and Maud Thomas$^{1}$}
\date{}
\begin{document}

\maketitle

{\small \noindent
$^1$ Sorbonne Universit\'e, CNRS, Laboratoire de Probabilit\'es, Statistique et Mod\'elisation, LPSM, 4 place Jussieu, F-75005 Paris, France,\\
$^2$ Sorbonne Universit\'e, INSERM, Institut Pierre Louis d'Epidémiologie et de Santé Publique, F75012 Paris, France\\
$^3$ CREST Laboratory, CNRS, Groupe des Écoles Nationales d'Économie et Statistique, Ecole Polytechnique, Institut Polytechnique de Paris, 5 avenue Henry Le Chatelier 91120 Palaiseau, France \\
E-mails: francesco.bonacina@sorbonne-universite.fr, olivier.lopez@ensae.fr, maud.thomas@sorbonne-universite.fr}
 
\begin{abstract}
This paper proposes a regression tree procedure to estimate conditional copulas. The associated algorithm determines classes of observations based on covariate values and fits a simple parametric copula model on each class. The association parameter changes from one class to another, allowing for non-linearity in the dependence structure modeling. It also allows the definition of classes of observations on which the so-called "simplifying assumption"  \citep[see][]{derumigny2017tests} holds reasonably well. When considering observations belonging to a given class separately, the association parameter no longer depends on the covariates according to our model. In this paper, we derive asymptotic consistency results for the regression tree procedure and show that the proposed pruning methodology, that is the model selection techniques selecting the appropriate number of classes, is optimal in some sense. Simulations provide finite sample results and an analysis of data of cases of human influenza presents the practical behavior of the procedure.
\end{abstract}

\section{Introduction}

 Since Sklar's seminal result, copula theory has emerged as a practical means of describing the dependence between random variables. Allowing one to distinguish between the marginal behavior of each component of a random vector and the dependence structure (represented by a copula function), Sklar's theorem opens the way to flexible modeling of various forms of dependence \cite[see][]{nelsen}. 
In this paper, we propose a new method to perform conditional copula analysis based on regression trees and to derive consistency results for this procedure.

Various estimation procedures and analyses of copulas have been studied in the statistical literature \citep[e.g. see][]{genest,tsukahara, fermanian1, segers,fermanian2}. In the presence of covariates, conditional copula analysis consists in fitting a copula function to the conditional distribution of a random vector. From an application point of view, \citet{dupuis2006multivariate} have shown their importance in modeling certain natural disasters such as hurricanes, or the dependence between different expense lines in actuarial problems.
\citet{lopez} and \citet{farkas2023semiparametric} have used this type of model for insurance claim management. Another important application, for example in finance, can be found in \citet{jaworski2010copula}. More generally, the study of conditional copulas also appears particularly important in Vine copulas, \citep[see][]{czado2022vine}. \citet{abegaz2012semiparametric,gijbels2012multivariate,gijbels2011conditional} have studied both semi-parametric and non-parametric procedures for performing this analysis. Finally, \citet{fermanian_lopez} have examined the case of high-dimensional covariates and relied on a dimension reduction approach to perform the analysis.

We propose here to use regression trees to perform this conditional copula analysis. Regression trees, along with the {\it Classification And Regression Tree} (CART) algorithm, were originally introduced by \citet{breiman}  algorithm and are now classic tools used for several applications  \citep[e.g. see][]{gocheva2019regression, farkas, farkas2021generalized,loh2014fifty}. Apart from the computational efficiency of the  CART estimation algorithm used to fit the model, an interesting feature of this approach is the ability to construct classes of individuals (based on their characteristics) with similar behavior. In the context of copula analysis, this corresponds to classes of individuals with the same copula (i.e. dependence) structure. This model can be seen as a means to easily generalize the ``simplifying'' assumption considered by many authors \citep[see e.g.][]{derumigny2017tests, kurz2022testing}. According to this hypothesis, only the marginal distributions of each component depend on the covariates, while the dependence structure does not vary with them. In contrast, in our model, the copulas are different for each cluster determined by the regression tree, and thus the simplifying assumption holds separately for each cluster.

The rest of the paper is organized as follows. In Section~\ref{sec1}, we describe the general framework of regression trees and the algorithm used to fit them to data. Section~\ref{sec2} is devoted to proving theoretical results on the consistency of this procedure. Particular attention is paid to the part of model selection, known as the ``pruning step", which consists of selecting an appropriate sub-tree from the maximal tree obtained by iterative partitioning of the data set. In Section~\ref{sec4}, the practical behavior is investigated through a simulation study and real data analysis. The proofs of the theoretical results are gathered in the Appendix. 

\section{Regression trees for conditional copula analysis}
\label{sec1}

\subsection{Model and notations}
\label{sec_notation}

 We consider a set of observations $(\mathbf{Y}_i,\mathbf{X}_i)_{1\leq i \leq n}$ consisting of independent identically distributed copies of the random vector $(\mathbf{Y},\mathbf{X}),$ where $\mathbf{X}\in \mathcal{X}\subset \mathbb{R}^d$ are covariates, and $\mathbf{Y}=(Y^{(1)},\ldots,Y^{(k)})\in \mathbb{R}^k$ is a random vector of response variables $Y^{(j)}$, $j=1,\ldots,k$. The marginal conditional cumulative distribution functions (c.d.f.) of the random vector $\mathbf{Y}$ given $\mathbf X= \mathbf x$ are defined as
\[
F^{(j)}(t^{(j)}|\mathbf{x})=\mathbb{P}\left(Y^{(j)}\leq t^{(j)}|\mathbf{X}=\mathbf{x}\right) , \quad t^{(j)}\in \mathbb{R}, \ j=1,\ldots, k.
\]
From Sklar's Theorem \citep{sklar}, the joint conditional c.d.f. $F(\mathbf{t}|\mathbf{x})=\mathbb{P}(\mathbf{Y}\leq \mathbf t|\mathbf{X}=\mathbf{x})$ can be expressed as, for all $ \mathbf t=(t^{(1)},\ldots, t^{(k)}) \in \mathbb R^k$,
\begin{equation}
\label{eq_sklar}
F(\mathbf t|\mathbf{x})=\mathfrak{C}_{\mathbf{x}}(F^{(1)}(t^{(1)}|\mathbf{x}),\ldots,F^{(k)}(t^{(k)}|\mathbf{x})),
\end{equation}
where, for all $\mathbf{x},$ $\mathfrak{C}_{\mathbf{x}}$ is a copula function, that is a c.d.f. on $[0,1]^k$ with margins uniformly distributed over $[0,1]$. The copula function $\mathfrak{C}_{\mathbf{x}}$ in~\eqref{eq_sklar} is unique if the distribution of $\mathbf{Y}$ is continuous, which is the assumption that we will make throughout the paper. In general, the analyses of the marginal distributions and the dependence structure are therefore made separately. 

In the following, we will consider a semi-parametric assumption as in \citep{abegaz2012semiparametric} or \citep{lopez} by introducing a parametric family of copula functions $\mathcal{C}=\{C_{\bm\theta}:\bm\theta\in \Theta\}$ with $\Theta\subset \mathbb{R}^m$. We denote $c_{\bm \theta}$ the copula density associated with $C_{\bm \theta}$, that is
\[
c_{\bm \theta}(\mathbf{u})=\frac{\partial^k C_{\bm\theta}(\mathbf{u})}{\partial u^{(1)}\ldots \partial u^{(k)}} , \quad \bm u=(u^{(1)}, \ldots, u^{(k)}) \in [0,1]^k.
\]

In the sequel, we assume that, for all $\mathbf x \in \mathbb R^d$, $\mathfrak C_{\mathbf x} \in \mathcal C$, meaning that there exists a unique  $\bm\theta^0(\mathbf x) \in \Theta $ such that 
\begin{equation}
 \mathfrak{C}_{\mathbf{x}}=C_{\bm\theta^0(\mathbf{x})}.
\end{equation}
Our aim is then to retrieve  the function $\bm\theta^0(\mathbf{x})$ from the data $(\mathbf{Y}_i,\mathbf{X}_i)_{1\leq i \leq n}$.

Our estimation strategy is based on regression trees. A tree $\mathbb{T}$ of size $K$ is a partition of $\mathcal X$, that is  $\mathbb{T}=(\mathcal{T}_\ell)_{\ell=1,\ldots,K}$ where $\mathcal{T}_\ell\cap \mathcal{T}_{\ell'}=\emptyset$ for $\ell\neq \ell'$ and $\cup_{\ell=1}^K \mathcal{T}_\ell=\mathcal{X}$. The sets $\mathcal T_{\ell}$, $\ell=1,\ldots, K$ are called leaves and each leaf  $\mathcal{T}_\ell$ is obtained as the intersection of conditions of the type $x_{-,\ell}^{(j)}\leq x^{(j)}\leq x_{+,\ell}^{(j)}$ if $X^{(j)}$ is continuous, and of the type $x^{(j)}\in \mathcal{A}_\ell^{(j)}$ where $\mathcal{A}_\ell^{(j)}$ is a set of potential modalities for a discrete covariate. This particular structure of the partition is associated with a binary tree structure, where the nodes of the tree correspond to conditions on a given covariate and the leaves of the tree to the final classification. The CART algorithm described in Section~\ref{sec_cart} will make this tree structure more obvious.

Given a tree $\mathbb{T}$ with $K$ leaves, we thus consider estimators of $\bm\theta^0(\mathbf{x})$ that are constant on each leaf of $\mathbb{T},$ that is, of the type
$\sum_{\ell=1}^K \theta_\ell \mathbf{1}_{\mathcal{T}_\ell}(\mathbf{x})$, with $\theta_\ell \in \mathbb R^m$. In other words, individuals are divided into K classes, for each of which the dependence structure is
described by a different copula (from the same parametric family, but with a specific parameter $\theta_\ell$). In the ideal case, the target function $\bm\theta^0(\mathbf{x})$ is constant on each leaf of the tree $\mathbb{T},$ meaning that $\bm\theta^0(\mathbf{x})=\theta^0_\ell$ for $\mathbf{x}\in \mathcal{T}_\ell,$ where  
\[
\theta^0_\ell=\arg \max_{\bm\theta \in \Theta}\mathbb{E}\left[\log c_{\bm\theta}(\mathbf{U}_i)\mathbf{1}_{\mathbf{X}_i\in \mathcal{T}_\ell}\right],
\]
where $c_{\bm\theta}$ is the copula density associated with the copula function $C_{\bm\theta}$ and $\mathbf U_i$ is the random variable defined by
\[
\mathbf{U}_i=(F^{(1)}(Y_{i}^{(1)}|\mathbf{X}_i),\ldots, F^{(k)}(Y_{i}^{(k)}|\mathbf{X}_i)),
\]
 which has uniform margins over $[0,1]$ and is jointly distributed according to the c.d.f. $\mathfrak{C}_{\mathbf{X}_i}=C_{\bm\theta^0(\mathbf{X}_i)}.$ 
 
 However, in practice, a misspecification bias is expected, since the target function $\bm\theta^0(\mathbf{x})$ is not a piecewise constant function while the estimator function is. 
For a given tree $\mathbb{T},$ the corresponding estimator $\widehat{\bm\theta}(\mathbf{x}|\mathbb{T})$ is defined as
\[
\widehat{\bm\theta}(\mathbf{x}|\mathbb{T})=\sum_{\ell=1}^K \widehat{\theta}_\ell\mathbf{1}_{\mathbf{x}\in \mathcal{T}_\ell},
\]
where
\[
\widehat{\theta}_\ell = \arg \max_{\bm\theta\in \Theta} \frac{1}{n}\sum_{i=1}^n \log c_{\bm\theta}(\widehat{\mathbf{U}}_i)\mathbf{1}_{\mathbf{X}_i\in \mathcal{T}_\ell},
\]
and
$(\widehat{\mathbf{U}}_i)_{1\leq i \leq n}$ are pseudo-observations, that is estimated versions of $(\mathbf{U}_i)_{1\leq i \leq n}.$

Typically, these pseudo-observations are the result of a preliminary estimation of the marginal distribution, namely $\widehat{U}_i^{(j)}=\widehat{F}^{(j)}(Y^{(j)}_i|\mathbf{X}_i),$ but alternative procedures are possible: for example, a parametric model can be used to handle the margins. In Section~\ref{sec_margins}, we also discuss the possibility of relying on tree-based methods to estimate the margins as well, although there is no obligation to use the same type of technique for the dependence structure as for the margins. Therefore, in the following, we will try to keep our results as general as possible, expressing convergence conditions that this step should verify, but without imposing a specific method. However, let us point out that an interesting feature of regression trees is their ability to deal with both quantitative and qualitative covariates, which requires relying on estimation techniques for the margins that satisfy the same requirements.

The rest of the section is devoted to presenting our estimation procedure based on regression trees.
We describe the CART procedure consisting of two steps:  the construction of the maximal tree (Section~\ref{sec_max_tree}) which determines the proper decomposition of the covariate space $\mathcal{X}$ to obtain the regression tree $\mathbb{T}$ and deduce an estimator $\widehat{\bm\theta}(\cdot|\mathbb{T})$, and the pruning step (Section~\ref{sec_pruning}) which corresponds to a selection model step. Fitting the dependence structure requires a preliminary estimation of the margins, which is done once and for all before starting the algorithm. Various methods may be used to deal with thus preliminary step, the only requirement being that they satisfy the conditions under which our theoretical results hold. Examples of possible methods to estimate the margins are presented in Section~\ref{sec_margins}.

\subsection{Regression tree estimation of the dependence structure}
\label{sec_cart}


Regression trees provide an easy and transparent way to group observations that have similar behavior in terms of the response variable $Y$. They constitute a nonparametric regression model capable of reproducing highly nonlinear trends in the data and are thus able to approximate a wide class of functions. In addition, it can include both quantitative and categorical (non-ordinal) covariates.

Originally proposed by \citet{breiman}, regression trees are implemented through the CART algorithm, which involves a two-step process. Initially, a maximal tree is constructed, forming a binary structure that assigns observations to numerous classes (leaves), often leading to overfitting. Subsequently, the maximal tree is pruned to identify the subtree that offers the best compromise between complexity and generalization ability.

Section~\ref{sec_max_tree}  describes how the construction of the optimal tree takes place, making explicit our split criterion based on the maximization of the log-likelihood of the copula mixture model. In Section~\ref{sec_pruning}, we define the penalization criterion and discuss the pruning phase.




\subsubsection{Construction of the maximal tree}
\label{sec_max_tree}
Recall that, as mentioned before, the computation of the pseudo-observations $\widehat{\mathbf U}_i$ is done once and for all before starting the algorithm. The CART procedure is applied to $(\widehat{\mathbf U}_i, \mathbf X_i)_{1\leq i\leq n}$with the aim of maximize the log-likelihood of the model described in Section~\ref{sec_notation}. Such log-likelihood function can be written as the sum of the log-likelihoods of the parametric copulas estimated for the individual leaves of the tree:
\begin{equation*}
    \mathcal{L}_n (\theta_1,\ldots,\theta_K) = \sum_{\ell=1}^K \bigl( \frac{1}{n}\sum_{i=1}^n \log c_{\theta_\ell}(\widehat{\mathbf{U}}_i)\mathbf{1}_{\mathbf{X}_i\in \mathcal{T}_\ell} \bigr).
\end{equation*}
More precisely, the log-likelihood of the model is maximized conditionally on the covariates as a consequence of the recursive partitioning of the observations. In fact, at each split the observations are separated by looking at their values for one of the covariates. Formally, if we denote by $D_P=(\widehat{\mathbf{U}}_i,\mathbf X_i)_{i \in P}$ the observations that belong to a certain node $P$ (parent) and by $R_P(\mathbf X)$ the condition of the covariates that identifies those observations---such that $R_P(\mathbf X_i)$ is $1$ if $(\widehat{\mathbf{U}}_i,\mathbf X_i) \in D_P$, $0$ otherwise - then the left and right child nodes are determined by conditions of the type $\{ X_i^{(j)} \leq s, (\widehat{\mathbf{U}}_i,\mathbf X_i) \in D_P \}$ and $\{ X_i^{(j)} > s , (\widehat{\mathbf{U}}_i,\mathbf X_i) \in D_P \}$. Such split is uniquely determined by the pair $(j, s)$, with $j=1,\dots,p$ and $s\in \mathbb{R}$. (This is true for quantitative covariates, while in presence of qualitative covariates, the split is performed following Remark~\ref{r_qual}.)

Initially, all observations are in the root of the tree, implying that the dependence among variables $\widehat U_i$ is modeled by a single copula with parameters $\bm\theta_{root}^0$, which are optimized using maximum likelihood estimation (MLE). Subsequently, each split is carried out to maximize the increase in model log-likelihood. This gain simply corresponds to the sum of log-likelihoods estimated for the child nodes—once more, evaluated in correspondence of the parameters optimized via MLE—from which log-likelihood of the parent node is subtracted. In practical terms, the optimal gain, and thus the optimal split, is determined by testing all possible splits. The splitting process ends when further splits fail to enhance the log-likelihood, or more commonly, upon meeting specific stopping criteria. For instance, a common criterion is setting a minimum number of observations per leaf node.

The pseudocode summarizing the construction of the maximal tree is presented in Algorithm~\ref{alg:cart}.

{\tiny
\RestyleAlgo{ruled}
\SetKwComment{Comment}{\#}{}
\begin{algorithm}
    \footnotesize
    \caption{Construction of the maximal tree}\label{alg:cart}
    \SetAlgoLined
    \DontPrintSemicolon
    \SetKwData{Rroot}{$R_{root}$}
    \SetKwData{ListRulesInternalNodes}{ListRulesInternalNodes}
    \SetKwData{ListRulesLeaves}{ListRulesLeaves}
    \SetKwData{D}{D}
    \SetKwData{DP}{$D_P$}
    \SetKwData{DL}{$D_\ell$}
    \SetKwData{DR}{$D_R$}
    \SetKwData{RP}{$R_P$}
    \SetKwData{RL}{$R_\ell$}
    \SetKwData{RR}{$R_R$}
    \SetKwData{gain}{gain}
    \SetKwData{bestgain}{best\_gain}
    \SetKwData{bestj}{best\_j}
    \SetKwData{bests}{best\_s}
    \SetKwData{true}{true}
    \SetKwData{false}{false}
    \SetKwFunction{BuildTree}{BuildTree}
    \SetKwFunction{StoppingCriteria}{StoppingCriteria}
    \SetKwFunction{FindOptimalSplit}{FindOptimalSplit}
    \SetKwFunction{AddItem}{AddItem}
    \SetKwFunction{RemoveItem}{RemoveItem}
    \SetKwFunction{size}{size}
    \SetKwFunction{LogL}{LogL}
    \SetKwProg{function}{function}{:}{}

    \KwData{
        \D $\gets (\mathbf{X}_i,\mathbf{\widehat{U}}_i)_{i=1,\dots,n}$
    }\;
    \function{\StoppingCriteria{\D}}{
        \Comment{Define conditions to stop tree growth}
        \Comment{For example limit the minimum observations per leaf}
        \KwRet{\true if stopping criteria met, otherwise \false}
    }\;
    \function{\FindOptimalSplit{\DP}}{
        \Comment{Initialization}
        $\bestgain, \bestj, \bests \gets (0, -999, -999)$\;
        \BlankLine
        \Comment{Grid search over all the possible features and split values}
        \For{each possible $(j, s)$}{
            $\DL \gets \DP[X^j \leq s]$\;
            $\DR \gets \DP[X^j > s]$\;
            $\gain \gets \LogL{\DL} + \LogL{\DR} - \LogL{\DP}$\;
            \If{\gain > \bestgain}{
            $\bestgain, \bestj, \bests \gets (\gain, j, s)$\;
            }
        }
        \BlankLine
        \KwRet{(\bestj, \bests)}
    }\;
    \function{\BuildTree{\D}}{
        \Comment{Initialization}
        \Rroot $\gets \{\forall \mathbf{X}\}$\;
        \ListRulesInternalNodes $\gets [\Rroot]$\;
        \ListRulesLeaves $\gets [\quad]$\;
        \BlankLine
        \Comment{Tree construction}
        \While{\size{\ListRulesInternalNodes}>0}{
            \Comment{Retrieve the rule and the observations of the parent node to be split}
            \RP $\gets$ \ListRulesInternalNodes[0]\;
            \DP $\gets$ \D[\RP{\D}]\;
            \BlankLine
            \eIf{\StoppingCriteria{\DP}}{
                \Comment{Move the rule defining this node in the list of the leaves}
                \ListRulesInternalNodes $\gets$ \RemoveItem{\ListRulesInternalNodes, \RP}\;
                \ListRulesLeaves $\gets$ \AddItem{\ListRulesLeaves, \RP}\;
            }{
                \Comment{Find the optimal split and compute the rules defining the left/right children} 
                $(j^*, s^*) \gets$ \FindOptimalSplit{\DP}\;
                \RL $\gets$ \RP $\wedge \{X^{j^*} \leq s^*\}$\;
                \RR $\gets$ \RP $\wedge \{X^{j^*} > s^*\}$\;
                \BlankLine
                \Comment{Replace the rule of the parent node with the ones of its children}
                \ListRulesInternalNodes $\gets$ \RemoveItem{\ListRulesInternalNodes, \RP}\;
                \ListRulesInternalNodes $\gets$ \AddItem{\ListRulesInternalNodes, \RL}\;
                \ListRulesInternalNodes $\gets$ \AddItem{\ListRulesInternalNodes, \RR}\;
            }
        }
        \BlankLine
        \KwRet{\ListRulesLeaves}
    }\;
\end{algorithm}
}

\begin{remark}
\label{r_qual}
The implementation of the CART algorithm here proposed as example requires quantitative (or binary) covariates, for which an ordering of values is straightforward. For qualitative variables with $M>2$ modalities, the algorithm should include an ordering step preliminary to the split research, as suggested in \citep{rpart_intro}. Specifically, first the modalities are sorted by increasing values of the $\widehat{\theta}$ parameter estimated by considering the observations associated with each modality. Then, the $M-1$ possible splits are evaluated and the optimal one identified. An example of this procedure is available in the code we implemented for the application on the human influenza data (\ref{sec:real_data_example}).
\end{remark}

\subsubsection{Pruning step}
\label{sec_pruning}

Obtaining the maximal tree from the CART algorithm is not sufficient to have a proper estimation of the objective function $\bm{\theta}^0$, since this decomposition leads to overfitting. A subtree must be extracted from this maximal tree. This subtree will achieve a proper compromise between goodness of fit and complexity.

The complexity is here measured in terms of the number of leaves of a given tree $\mathbb{T}$. The selected subtree is thus obtained through the maximization of the following penalized criterion,
\begin{equation}
\label{penalized}
\bar{\bm\theta}(\mathbf{x})=\arg \max_{\widehat{\bm\theta}(\cdot|\mathbb{T})}\frac{1}{n}\sum_{i=1}^n \log c_{\widehat{\bm\theta}(\mathbf{X}_i|\mathbb{T})}(\widehat{\mathbf{U}}_i)-\lambda dim(\mathbb{T}).
\end{equation}
where the $\arg\max$ is taken over all subtrees $\widehat{\bm\theta}(\cdot|\mathbb{T})$ of $\mathbb{T}$ and $dim(\mathbb{T})$ is the number of leaves of $\mathbb{T}.$
This criterion could give the impression that one needs to consider all the possible subtrees within the maximal tree, and then select the optimal one. Fortunately, the particular shape of the penalty in \eqref{penalized} ensures that the best tree with $K$ leaves (according to this criterion) is a subtree of the best tree with $K+1$ leaves \citep[see][]{breiman}. This selection is then performed through validation on a test sample or cross-validation.


\subsection{Estimation of the margins}
\label{sec_margins}

Let us consider a given margin $Y^{(j)}$. If the components of $\mathbf{X}$ are all continuous covariates, a simple non-parametric estimator can be obtained via, for example, kernel smoothing. Following \citet{gijbels2012multivariate}, that is
\begin{equation}\label{kernelF}\widehat{F}^{(j)}(t^{(j)}|\mathbf{x})=\frac{\sum_{i=1}^n K\left(\frac{\mathbf{X}_i-\mathbf{x}}{h}\right)\mathbf{1}_{Y_i^{(j)}\leq t^{(j)}}}{\sum_{i=1}^n K\left(\frac{\mathbf{X}_i-\mathbf{x}}{h}\right)},
\end{equation}
where $h>0$ is the bandwidth and  the kernel $K$ is, for example, $K(\mathbf{x})=\prod_{j=1}^d k(x^{(j)})$ with $k$ a positive function and such that $\int k(u)\mathrm{d}u=1.$ As it is classical for kernel estimators, the rate of uniform convergence is $O(h^2+[\log n]^{1/2}n^{-1/2}h^{-d/2})$ \citep[see e.g.][]{einmahl2005uniform}, where $h^2$ corresponds to the bias term.

On the other hand, this estimator is not valid if $\mathbf{X}$ contains some qualitative components. In this perspective, consider the case where $\mathbf{X}$ has $M$ modalities,  a possible non-parametric estimator is then
\[
\widehat{F}^{(j)}(t^{(j)}|\mathbf{x})=\frac{1}{n_{\mathbf x}}\sum_{i=1}^n \mathbf{1}_{Y_i^{(j)}\leq t^{(j)}}\mathbf{1}_{\mathbf{X}_i=\mathbf{x}},
\]
where $n_{\mathbf x}=\sum_{i=1}^{n}\mathbf{1}_{\mathbf{X}_i=\mathbf{x}}.$  Note that since the covariates are assumed to be random, so is $n_{\mathbf x}$. If $n_{\mathbf x}$ were not random, the rate of convergence would be typically the same as for an empirical c.d.f., that is $n_{\mathbf x}^{-1/2}.$ 

However, this approach quickly reaches its limits since, when $M$ is large, the number of observations such that $\mathbf{X}_i=\mathbf{x}$ becomes quite small, diminishing considerably the rate of convergence. An alternative is to build classes of modalities, that is decomposing the set of covariates into $m<M$ modalities, as is the case if regression trees are also applied to the margins. Consider this decomposition into $m$ modalities, and let $\mathcal M(\mathbf x)$ denote the class to which x belongs to, and 
\[
\widehat{F}^{(j)}(t^{(j)}|\mathbf{x})=\frac{\sum_{i=1}^n \mathbf{1}_{Y_i^{(j)}\leq t^{(j)}}\mathbf{1}_{\mathbf{X}_i\in \mathcal{M}(\mathbf{x})}}{\sum_{i=1}^n \mathbf{1}_{\mathbf{X}_i\in \mathcal{M}(\mathbf{x})}}.
\]
In this case, a bias term appears, since $\widehat{F}^{(j)}(t^{(j)}|\mathbf{x})$ converges towards $\mathbb{P}(Y^{(j)}\leq t^{(j)}|\mathcal{M}(\mathbf{X})=\mathcal{M}(\mathbf x)).$

An alternative way to proceed is also to consider a parametric model for the margins, like, for example, Generalized Linear Model \citep[see e.g.][]{nelder1972generalized,hastie2017generalized}. In this case, and under proper assumptions, the convergence rate in the estimation of the margins can become $n^{-1/2},$ up to a strong assumption on the distributions.

Due to the variety of possible approaches in estimating the margins, we will keep the rest of the paper as general as possible regarding this point, only requiring generic convergence assumptions on this preliminary step.

\section{Consistency results}
\label{sec2}

This section is dedicated to presenting the main theoretical results that validate the asymptotic behavior of the copula tree estimation procedure. We gather and discuss the list of assumptions required to obtain these results in Section~\ref{sec:assum}. Moving on to Section~\ref{sec:cons}, we explore the consistency of a single tree (with a given number of leaves $K$ that may tend to infinity). We chose to focus on the stochastic part of the error, while the approximation error is expected to decrease with $K.$ Rhe rate of the decrease depends on the specific shape of the target function $\theta^0(\mathbf{x})$ which remains an open problem in the regression tree literature. In Section~\ref{sec:oracle}, we investigate the ability of the penalized criterion to achieve a similar performance as if  the optimal number of leaves were known.

\subsection{Conditions and assumptions}
\label{sec:assum}

First, Assumption~\ref{a:pseudo} below controls the rate of consistency of the pseudo-observations, which represents the c.d.f. of the margins.

\begin{assum}
\label{a:pseudo}
 Assume that
\begin{equation} \label{a11}
\sup_{\substack{i=1,\ldots,n\\{j=1,\ldots,k}}}\left|\frac{U^{(j)}_i}{\widehat{U}_i^{(j)}}+\frac{1-U^{(j)}_i}{1-\widehat{U}^{(j)}_i}\right| = O_P(1) \, .
\end{equation}
For some $0<\alpha<1/2,$
\begin{equation}
\label{a12}
\sup_{\substack{{i=1,\ldots,n}\\{j=1,\ldots,k}}}\left|\frac{\widehat{U}^{(j)}_i-U^{(j)}_i}{\left[U_i^{(j)}(1-U_i^{(j)})\right]^{\alpha}}\right|=O_P(\varepsilon_n),
\end{equation}
for some sequence $\varepsilon_n$ that tends to 0 as $n$ tends to infinity.
\end{assum}

Assumption~\ref{a11} is here to control the behavior of the pseudo-observations near the border of $[0,1]^k.$ If the margins are estimated via empirical distribution functions, this assumption easily holds from \citep[see][Remark ii]{wellner1978limit}. In case this assumption would not hold for more complex estimators, it can be avoided through the introduction of trimming, that is removing points too close to the boundaries of the unit square. This would introduce a bias that can be controlled through \eqref{a12} (see remark \ref{rem_trimming}).

As it will appear in the theoretical results of Sections~\ref{sec:cons} and~\ref{sec:oracle}, this rate is expected to go faster to zero than the part related to the estimation of the tree itself, otherwise it will be predominant.It is important to note that \eqref{a12} is similar to the slightly stronger condition
\[
\sup_{\substack{{t \in \mathbb R}\\{j=1,\ldots,d}}}\left|\frac{\widehat{F}^{(j)}(t|\mathbf{x})-F^{(j)}(t|\mathbf{x})}{\left[F^{(j)}(t|\mathbf{x})(1-F^{(j)}(t|\mathbf{x}))\right]^{\alpha}}\right|=O_P(\varepsilon_n).
\] 
If we consider the estimation of the (unconditional) c.d.f. $F^{(j)}(t)=\mathbb{P}(Y^{(j)}\leq t)$ by the empirical distribution function, this condition is easily satisfied  with $\varepsilon_n=n^{-1/2}$ \citep[see][Example 19.12]{vandervaart}. In the case of the kernel-based estimator, Section~\ref{sec:margins1} shows that the rate is slower, namely $\varepsilon_n=(h^{2}+[\log n]^{1/2}n^{-1/2}h^{-d/2})$, and, in the case of discrete covariates, Section~\ref{sec:margins2} shows that $\varepsilon_n=n^{-1/2}.$

Before presenting the rest of the assumptions, we introduce two conditions on classes of functions which will be necessary in the following.

\begin{cond}\label{cond:1}
A class of functions $\mathcal{F}=\{\mathbf{u}\mapsto \varphi_{\bm\theta}(\mathbf{u}):\bm\theta\in \Theta\}\subset L^2(\mathbb{R}^k)$ (for some $k> 0$) is said to satisfy Condition~\ref{cond:1} if
\[
|\varphi_{\bm\theta}(\mathbf u)-\varphi_{\bm\theta'}(\mathbf u)|\leq B(\mathbf{u})\|\bm\theta-\bm\theta'\|_{1}, \quad \mathbf{u} \in [0,1]^k
\]
where $B$ is a function in $\mathbb{R}^k$ such that $\mathbb{E}[B(\mathbf{U})^2]<\infty.$ 

For such a class, there exists an envelope function, that is a function $\Phi$ such that, for all $\bm\theta\in \Theta,$ $|\phi_{\bm\theta}(\mathbf{u})|\leq \Phi(\mathbf{u})$ and $\mathbb{E}[\Phi(\mathbf{U})^2]<\infty.$ Taking any point $\widetilde{\bm\theta}\in \Theta,$ $\Phi$ can be chosen as $\Phi(\mathbf{u})=\varphi_{\widetilde{\bm\theta}}(\mathbf{u})+diam(\Theta)B(\mathbf{u}),$ where $diam(\Theta)$ denotes the diameter of the compact set $\Theta.$
\end{cond}

\begin{cond}\label{cond:2}
A class of functions $\mathcal{F}=\{\mathbf{u}\mapsto \varphi_{\bm\theta}(\mathbf{u}):\bm\theta\in \Theta\}\subset L^2(\mathbb{R}^k)$ (for some $k> 0$) is said to satisfy Condition~\ref{cond:2} if
\begin{enumerate}
    \item 
there exist an envelope $\Phi$ and a universal constant $A_1$ such that, for all $\varphi\in \mathcal{F},$
\[
|\varphi(\mathbf{u})|\leq \Phi(\mathbf{u})\leq A_1\sum_{r=1}^k\frac{1}{\{u^{(r)}[1-u^{(r)}]\}^{\beta_1}}, \quad \mathbf{u}\in [0,1]^k,
\]
with $0\leq \beta_1<1/2.$
\item there exists a universal constant $A_2$ such that for all $\varphi\in \mathcal{F},$ 
\[
|\partial_j \varphi(\mathbf{u})|\leq \frac{A_2}{\left\{u^{(j)}[1-u^{(j)}]\right\}^{\beta_2}} \sum_{r=1}^d \frac{1}{\{u^{(r)}[1-u^{(r)}]\}^{\beta_3}}, 
\]
with $0\leq \beta_2\leq 1$, $\beta_3<1/2$, and
where $\partial_j$ denotes the partial derivative with respect to the $j-$th component of $\mathbf{u}.$
\end{enumerate}
\end{cond}
Theses conditions allow controlling the complexity of the class of functions and are related to classical assumptions used for the consistency of classical maximum likelihood estimators \citep[see e.g.][]{vandervaartwellner}.
The second condition is required to control the behavior of the copula log-likelihood and of its derivatives close to the boundaries of $[0,1]^d.$ 
These conditions are similar to the one used for example in \citep{omelka,tsukahara}. They hold for many classical classes of copula functions like Gaussian, Clayton, Frank, Gumbel families. We thus consider the two following assumptions.

\begin{assum}\label{assum:llk}
Let 
\[
\mathcal{F}_1 = \left\{\mathbf{u}\mapsto \log c_{\bm\theta}(\mathbf{u})\, , \bm\theta\in \Theta \right\}.
\]
Assume that $\mathcal F_1$ satisfies Conditions~\ref{cond:1} and \ref{cond:2}.
\end{assum}

\begin{assum}\label{assum:grad:llk}
Let 
\[
 \mathcal{F}_2 = \left\{\mathbf{u}\mapsto \nabla_{\bm\theta} \log c_{\bm\theta}(\mathbf{u})\, , \bm\theta\in \Theta \right\}.
\]
Assume that $\mathcal F_2$ satisfies Conditions~\ref{cond:1} and \ref{cond:2}.
\end{assum}

\subsection{Asymptotic theory for a single tree}
\label{sec:cons}

In this section, we consider a tree $\mathbb{T}=(\mathcal{T}_\ell)_{\ell=1,\ldots,K}$ with $K$ leaves. 

Let 
\[
\bm\theta^0 = (\theta^0_1,\ldots,\theta^0_K)=\arg \max_{(\theta_1,\ldots,\theta_K)}\sum_{\ell=1}^K \mathbb{E}\left[\log c_{\theta_\ell}(\mathbf U)\mathbf{1}_{\mathbf{X}\in \mathcal{T}_\ell}\right],
\]
where the maximum is supposed to be achieved at a unique point $(\theta^0_1,\ldots,\theta^0_K),$ and we denote
\[
\bm\theta^0(\mathbf{x}|\mathbb{T})=\sum_{\ell=1}^K \theta^{0}_\ell\mathbf{1}_{x\in \mathcal{T}_\ell}.
\]

Proposition~\ref{prop_consistency} presented below is a consistency result. To that purpose, we consider the $L^1$-norm to compare our maximum likelihood estimator $\widehat{\bm\theta}(\cdot|\mathbb{T})$ and $\bm\theta^0(\cdot|\mathbb{T})$:

\[
\|\widehat{\bm\theta}(\cdot|\mathbb{T})-\bm\theta^{0}(\cdot|\mathbb{T})\|_1=\int \|\widehat{\bm\theta}(\mathbf{x}|\mathbb{T})-\bm\theta^0(\mathbf{x}|\mathbb{T})\|_1\mathrm{d}\mathbb{P}_{\mathbf{X}}(\mathbf{x})=\sum_{\ell=1}^K | \widehat \theta_\ell - \theta^0_\ell| \mathbb{P}(\mathbf X \in \mathcal T_\ell),
\] 
where $\mathbb{P}_{\mathbf{X}}$ is the distribution of the covariates $\mathbf{X}$. 

\begin{proposition}
\label{prop_consistency}
Under Assumptions~\ref{a:pseudo} to~\ref{assum:llk}, and if $n[K\log K]^{-1}\rightarrow \infty,$
\[
\|\widehat{\bm\theta}(\cdot|\mathbb{T})-\bm\theta^{0}(\cdot|\mathbb{T})\|_1=o_P(1).
\]
\end{proposition}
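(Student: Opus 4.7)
The plan is to prove Proposition~\ref{prop_consistency} by a standard M-estimator (argmax) consistency argument, carefully adapted to two complications: the number of leaves $K$ is allowed to grow with $n$, and the log-likelihood is evaluated at pseudo-observations $\widehat{\mathbf U}_i$ rather than the unobservable $\mathbf U_i$. I would introduce three criteria: the population target
$$\mathcal L(\bm\theta)=\sum_{\ell=1}^K\esp\!\left[\log c_{\theta_\ell}(\mathbf U)\mathbf{1}_{\mathbf X\in\mathcal T_\ell}\right],$$
its oracle empirical version $\tilde{\mathcal L}_n(\bm\theta)$ with expectation replaced by the empirical mean over the true $\mathbf U_i$, and the computable $\mathcal L_n(\bm\theta)$ using $\widehat{\mathbf U}_i$. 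Since $\widehat{\bm\theta}$ maximizes $\mathcal L_n$ and $\bm\theta^0$ maximizes $\mathcal L$, the usual sandwich
$$0\leq \mathcal L(\bm\theta^0)-\mathcal L(\widehat{\bm\theta})\leq 2\sup_{\bm\theta\in\Theta^K}|\mathcal L_n(\bm\theta)-\mathcal L(\bm\theta)|$$
combined with well-separation of $\bm\theta^0$ will give consistency in the $L^1$-geometry, noting that $\|\widehat{\bm\theta}(\cdot|\mathbb T)-\bm\theta^0(\cdot|\mathbb T)\|_1=\sum_\ell p_\ell|\widehat\theta_\ell-\theta^0_\ell|$ with $p_\ell=\mathbb P(\mathbf X\in\mathcal T_\ell)$.

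The first part of the deviation, $\tilde{\mathcal L}_n-\mathcal L$, is an empirical process indexed by $\bm\theta\in\Theta^K$. Assumption~\ref{assum:llk}, through Condition~\ref{cond:1}, yields a Lipschitz modulus in $\bm\theta$ with a square-integrable envelope; through Condition~\ref{cond:2}, it provides an envelope that remains integrable despite the singularities of $\log c_{\bm\theta}$ near $\{u^{(r)}\in\{0,1\}\}$. Because $(\mathcal T_\ell)_\ell$ is a fixed partition, the indicators contribute no extra entropy; a bracketing argument on the product space $\Theta^K$ (of effective dimension $mK$) yields $\sup_{\bm\theta}|\tilde{\mathcal L}_n(\bm\theta)-\mathcal L(\bm\theta)|=O_P(\sqrt{K\log K/n})$, which is $o_P(1)$ precisely under the hypothesis $n/(K\log K)\to\infty$.

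For the pseudo-observation correction $\mathcal L_n-\tilde{\mathcal L}_n$, I would use a first-order Taylor expansion
$$\log c_{\bm\theta}(\widehat{\mathbf U}_i)-\log c_{\bm\theta}(\mathbf U_i)=\sum_{j=1}^k(\widehat U_i^{(j)}-U_i^{(j)})\,\partial_j\log c_{\bm\theta}(\mathbf U_i^*),$$
with $\mathbf U_i^*$ on the segment between $\mathbf U_i$ and $\widehat{\mathbf U}_i$. Condition~\ref{cond:2} applied to $\mathcal F_1$ controls $|\partial_j\log c_{\bm\theta}|$ by a product of $\{u(1-u)\}^{-\beta_2}$ and $\sum_r\{u^{(r)}(1-u^{(r)})\}^{-\beta_3}$, Assumption~\ref{a:pseudo}~\eqref{a11} lets me replace $\mathbf U_i^*$ by $\mathbf U_i$ up to a bounded multiplicative factor, and Assumption~\ref{a:pseudo}~\eqref{a12} bounds $|\widehat U_i^{(j)}-U_i^{(j)}|$ by $\varepsilon_n[U_i^{(j)}(1-U_i^{(j)})]^\alpha$. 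Because the combined exponent $\alpha-\beta_2-\beta_3$ is admissible ($>-1$) in the regime allowed by Assumption~\ref{a:pseudo} and Condition~\ref{cond:2}, the per-observation error is bounded, uniformly in $\bm\theta$, by $\varepsilon_n\,\Psi(\mathbf U_i)$ with $\esp[\Psi(\mathbf U)]<\infty$; averaging over $i$ and summing over $\ell$ gives $\sup_{\bm\theta}|\mathcal L_n(\bm\theta)-\tilde{\mathcal L}_n(\bm\theta)|=O_P(\varepsilon_n)=o_P(1)$.

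To conclude, $\mathcal L(\bm\theta)$ decouples leafwise into weighted conditional contrasts $p_\ell\,\esp[\log c_{\theta_\ell}(\mathbf U)\mid\mathbf X\in\mathcal T_\ell]$; the stated uniqueness of each $\theta^0_\ell$ together with compactness of $\Theta$ gives well-separation of each leaf maximum, and the weighted sum transfers this to well-separation of $\bm\theta^0$ in the $L^1$-geometry $\sum_\ell p_\ell|\cdot|$. Combined with the two deviation bounds above, this yields $\sum_\ell p_\ell|\widehat\theta_\ell-\theta^0_\ell|=o_P(1)$. The \emph{main obstacle} is the entropy bookkeeping in the second paragraph: one must exploit the separable block structure of $\mathcal L_n$ over leaves so that the joint problem on $\Theta^K$ behaves like $K$ independent fixed-dimensional problems, producing the sharp factor $K\log K$ rather than a cruder exponential-in-$K$ entropy bound, and pairing this with the boundary integrability afforded by Condition~\ref{cond:2}.
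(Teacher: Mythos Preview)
Your proposal is correct and follows essentially the same approach as the paper: the paper packages the two deviation bounds you describe into its Proposition~\ref{p1} (with Lemma~\ref{lemma_GC} giving the bracketing number $\mathcal N(\varepsilon,\mathfrak F)\leq (K^{m/2}C_1\|\Phi\|_2^m/\varepsilon^m)^K$ that yields the $\sqrt{K\log K/n}$ rate, and Lemma~\ref{lemma_cpobs} handling the pseudo-observation Taylor expansion exactly as you sketch), and then invokes an argmax consistency theorem. The only cosmetic difference is that the paper first reduces to $\sup_\ell|\widehat\theta_\ell-\theta^0_\ell|=o_P(1)$ via \citep[][Corollary 3.2.3]{vandervaartwellner} rather than working directly in the weighted $L^1$-geometry as you do; both routes rely on the same leafwise well-separation implicit in the uniqueness of $(\theta^0_1,\ldots,\theta^0_K)$.
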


By considering an additional assumption on the copula family, namely Assumption~\ref{assum:grad:llk}) and conditions on the Hessian matrix, we obtain the convergence rate.

\begin{theorem}
\label{th_main} Under Assumptions~\ref{a:pseudo} to~\ref{assum:grad:llk}, and assume furthermore that for $\ell=1,\ldots,K,$ the Hessian matrix
$\nabla^2_{\theta}\log c_{\bm\theta}(\theta^0_\ell)$ is invertible. Then, 
\[
\|\widehat{\bm\theta}(\cdot|\mathbb{T})-\bm\theta^{0}(\cdot|\mathbb{T})\|_1=O_P\left(\frac{[K\log K]^{1/2}}{n^{1/2}}+\varepsilon_n\right).
\]
\end{theorem}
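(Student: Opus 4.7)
The plan is to reduce the $L^1$ error to a sum over leaves,
\[
\|\widehat{\bm\theta}(\cdot|\mathbb T)-\bm\theta^{0}(\cdot|\mathbb T)\|_1=\sum_{\ell=1}^K p_\ell\,|\widehat\theta_\ell-\theta^0_\ell|,\qquad p_\ell=\mathbb P(\mathbf X\in\mathcal T_\ell),
\]
and to carry out a standard M-estimator expansion on each leaf. The first-order condition for $\widehat\theta_\ell$, Taylor-expanded in $\bm\theta$ around $\theta^0_\ell$, yields
\[
p_\ell\,(\widehat\theta_\ell-\theta^0_\ell)=-\bar H_\ell^{-1}\bigl[S^{(1)}_\ell+S^{(2)}_\ell\bigr]\,(1+o_P(1)),
\]
where $\bar H_\ell=\mathbb E[\nabla^2_{\bm\theta}\log c_{\theta^0_\ell}(\mathbf U)\mid \mathbf X\in\mathcal T_\ell]$ is invertible by assumption and the intermediate empirical Hessian converges to $p_\ell\bar H_\ell$ via Proposition~\ref{prop_consistency} combined with the Lipschitzness of $\nabla_{\bm\theta}\log c_{\bm\theta}$ from Condition~\ref{cond:1} on $\mathcal F_2$. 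Here
\[
S^{(1)}_\ell=\frac{1}{n}\sum_{i=1}^n\nabla_{\bm\theta}\log c_{\theta^0_\ell}(\mathbf U_i)\mathbf 1_{\mathbf X_i\in\mathcal T_\ell},\quad
S^{(2)}_\ell=\frac{1}{n}\sum_{i=1}^n\bigl[\nabla_{\bm\theta}\log c_{\theta^0_\ell}(\widehat{\mathbf U}_i)-\nabla_{\bm\theta}\log c_{\theta^0_\ell}(\mathbf U_i)\bigr]\mathbf 1_{\mathbf X_i\in\mathcal T_\ell}.
\]

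For the oracle part $S^{(1)}_\ell$, mean zero holds by definition of $\theta^0_\ell$ and the variance is of order $p_\ell/n$ thanks to the square-integrable envelope provided by Condition~\ref{cond:2} on $\mathcal F_2$; Bernstein's inequality with a union bound over the $K$ leaves then delivers $|S^{(1)}_\ell|\lesssim\sqrt{p_\ell\log K/n}$ uniformly in $\ell$. For the pseudo-observation remainder $S^{(2)}_\ell$, a first-order Taylor expansion of $\nabla_{\bm\theta}\log c_{\theta^0_\ell}$ in $\mathbf u$ combined with Condition~\ref{cond:2}(2) applied to $\mathcal F_2$ and the rate \eqref{a12} bounds the $i$-th increment by $\varepsilon_n\Psi(\mathbf U_i)$, after invoking \eqref{a11} to transfer singular factors evaluated at $\widehat U_i^{(j)}$ back to $U_i^{(j)}$. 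The exponents $\alpha,\beta_2,\beta_3$ in the conditions are arranged so that $\Psi$ is integrable, yielding $|S^{(2)}_\ell|=O_P(\varepsilon_n p_\ell)$.

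Assembling the pieces, $|\widehat\theta_\ell-\theta^0_\ell|\le C\bigl(\sqrt{\log K/(np_\ell)}+\varepsilon_n\bigr)$ with probability tending to one, so
\[
\sum_\ell p_\ell|\widehat\theta_\ell-\theta^0_\ell|\;\le\;C\Bigl(\sqrt{\tfrac{\log K}{n}}\sum_\ell\sqrt{p_\ell}+\varepsilon_n\sum_\ell p_\ell\Bigr)\;\le\;C\Bigl(\sqrt{\tfrac{K\log K}{n}}+\varepsilon_n\Bigr)
\]
by Cauchy--Schwarz and $\sum_\ell p_\ell=1$, which is the stated rate.

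The main obstacle is the control of $S^{(2)}_\ell$: one must check that the singular boundary behaviour of $\partial_j\nabla_{\bm\theta}\log c_{\bm\theta}$ supplied by Condition~\ref{cond:2}(2) (with exponents $\beta_2,\beta_3$) is absorbed by the stabilising $[U(1-U)]^\alpha$ factor of \eqref{a12}, and that \eqref{a11} can genuinely be used to replace $\widehat U$ by $U$ in the singular denominators without losing integrability. This is a delicate boundary integrability calculation in $[0,1]^k$ where the interplay between the pseudo-observation rate exponent $\alpha$ and the copula singularity exponents is decisive; the rest of the argument is a fairly routine parametric MLE expansion combined with Cauchy--Schwarz over the leaves.
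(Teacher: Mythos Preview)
Your leaf-by-leaf expansion is in the right spirit and the final Cauchy--Schwarz bookkeeping is correct, but two steps do not go through under the stated assumptions, and the paper's proof is organised precisely so as to avoid them.

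\medskip
\textbf{(1) Bernstein on $S^{(1)}_\ell$.} The summands $\nabla_{\bm\theta}\log c_{\theta^0_\ell}(\mathbf U_i)\mathbf 1_{\mathbf X_i\in\mathcal T_\ell}$ are not bounded and, under Condition~\ref{cond:2}(1) for $\mathcal F_2$, have only a square-integrable envelope (the exponent $\beta_1$ can be any value in $[0,1/2)$, so higher moments may fail). Bernstein's inequality therefore does not apply. This is repairable: since the variance is at most $Cp_\ell/n$ (marginals of $\mathbf U$ are uniform conditional on $\mathbf X$), one has $\mathbb E|S^{(1)}_\ell|\le C\sqrt{p_\ell/n}$, whence $\mathbb E\sum_\ell|S^{(1)}_\ell|\le C\sqrt{K/n}$ directly, no union bound needed. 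The paper obtains the same control (with an extra $\log K$) through the Dudley/entropy-integral bound of Proposition~\ref{p1} applied to the score class $\mathfrak F_2$, which only requires an $L^2$ envelope.

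\medskip
\textbf{(2) The empirical Hessian at pseudo-observations.} Your expansion produces the matrix $\frac{1}{n}\sum_i\nabla^2_{\bm\theta}\log c_{\tilde\theta_\ell}(\widehat{\mathbf U}_i)\mathbf 1_{\mathbf X_i\in\mathcal T_\ell}$, and you need it to converge to $p_\ell\bar H_\ell$. But the assumptions give Conditions~\ref{cond:1}--\ref{cond:2} only on $\mathcal F_1$ and $\mathcal F_2$; nothing controls the boundary behaviour in $\mathbf u$ of the \emph{second} $\bm\theta$-derivative, so passing from $\widehat{\mathbf U}_i$ to $\mathbf U_i$ in the Hessian is not covered (Lemma~\ref{lemma_cpobs} cannot be invoked). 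The paper avoids this entirely: instead of expanding the empirical score, it bounds $\sup_{\bm\theta}|\dot{\mathcal L}_n(\bm\theta)-\dot{\mathcal L}(\bm\theta)|$ uniformly by Proposition~\ref{p1}, writes
\[
\dot{\mathcal L}(\bm\theta^0)-\dot{\mathcal L}(\widehat{\bm\theta})
=\bigl[\dot{\mathcal L}(\bm\theta^0)-\dot{\mathcal L}_n(\bm\theta^0)\bigr]
+\dot{\mathcal L}_n(\bm\theta^0)
+\bigl[\dot{\mathcal L}_n(\widehat{\bm\theta})-\dot{\mathcal L}(\widehat{\bm\theta})\bigr],
\]
and then Taylor-expands only the \emph{population} score $\dot{\mathcal L}$ in $\bm\theta$, so that the Hessian appearing is $\mathbb E[\nabla^2_{\bm\theta}\log c_{\theta^0_\ell}(\mathbf U)\mathbf 1_{\mathcal T_\ell}]$ and no pseudo-observation ever touches a second derivative. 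That is the key structural difference between your route and the paper's; if you want to keep your expansion you would have to add a Condition~\ref{cond:2}--type hypothesis on the Hessian class, which the theorem does not assume.
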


It is not surprising to notice that the stochastic part of the error deteriorates with $K,$ due to the increase of the complexity of the model. On the other hand, although this part is harder to track, the approximation error is supposed to decrease with $K,$ which means that $\bm\theta^0(\cdot|\mathbb{T})$ is supposed to be closer to the "true" target function $\bm\theta^0(\cdot)$ when the number of leaves of $\mathbb{T}$ increases.

\subsection{Oracle property for the pruning step}
\label{sec:oracle}

Let us define the optimal subtree extracted from the maximal tree $\mathbb{T}_{\max}$ (which has $K_{\max}$ leaves) as
\begin{equation}
\bm\theta^0(\mathbf{x})=\arg \max_{\bm\theta^0(\cdot|\mathbb{T})}\mathbb{E}\left[ \log c_{\bm{\theta}^0(\mathbf{X}|\mathbb{T})}({\mathbf{U}})\right].
\end{equation}
Let $K^0$ denote the number of leaves of $\bm\theta^0.$ If $K^0$ were known, Theorem~\ref{th_main} shows that one may expect a convergence rate of $\sqrt{K^{0}\log K^{0}/n}$ for the stochastic part. The next result shows that the penalized procedure has the ability to asymptotically achieve this optimal rate even though the number $K^{0}$ is unknown. This, of course, requires conditions on the penalizing constant $\lambda.$

\begin{theorem}
\label{th_main2}
Assume that the assumptions of Theorem~\ref{th_main} hold for all of the subtrees of the maximal tree with $K_{\max}$ leaves.
Then if $\lambda\rightarrow 0,$ and if $\lambda n^{1/2}[K_{\max}\log K_{\max}]^{-1/2}\rightarrow \infty,$
\[
\|\bar{\bm\theta}-\bm\theta^0\|_1=O_P\left(\frac{[K^0\log K^0]^{1/2}}{n^{1/2}}+\varepsilon_n\right).
\]
\end{theorem}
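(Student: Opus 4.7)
The plan is to follow the standard oracle-inequality strategy for penalized model selection, reducing the problem to a careful application of Theorem~\ref{th_main} to the random subtree $\bar{\mathbb T}$. Let $\bar{\mathbb T}$ denote the subtree realizing $\bar{\bm\theta}$, with $\bar K=\dim(\bar{\mathbb T})$, and write $\ell(\bm\theta)=\mathbb E[\log c_{\bm\theta(\mathbf X)}(\mathbf U)]$ and $\ell_n(\bm\theta)=n^{-1}\sum_{i=1}^n \log c_{\bm\theta(\mathbf X_i)}(\widehat{\mathbf U}_i)$. Since $\bar{\bm\theta}$ maximizes the penalized criterion~\eqref{penalized} and $\widehat{\bm\theta}(\cdot|\mathbb T^0)$ is a valid competitor, and since $\bm\theta^0$ is piecewise constant on $\mathbb T^0$ so that $\ell_n(\widehat{\bm\theta}(\cdot|\mathbb T^0))\geq \ell_n(\bm\theta^0)$ by the MLE property, one obtains a master inequality
\[
[\ell(\bm\theta^0)-\ell(\bar{\bm\theta})] + \lambda \bar K \leq \lambda K^0 + R_n,
\]
with $R_n=\{\ell_n(\bar{\bm\theta})-\ell(\bar{\bm\theta})\}-\{\ell_n(\bm\theta^0)-\ell(\bm\theta^0)\}$.

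The core step is to control $R_n$ uniformly with respect to the data-dependent subtree $\bar{\mathbb T}$. Thanks to the nested property of the Breiman pruning sequence noted after~\eqref{penalized}, only at most $K_{\max}$ candidate subtrees arise, so I would combine the chaining/entropy arguments of Theorem~\ref{th_main} (relying on Conditions~\ref{cond:1}-\ref{cond:2}) with a union bound over this family, simultaneously tracking the $\varepsilon_n$-error coming from Assumption~\ref{a:pseudo} via the pseudo-observations. The output is a high-probability bound
\[
|R_n|\leq C\bigl([K_{\max}\log K_{\max}]^{1/2}n^{-1/2}+\varepsilon_n\bigr).
\]
Combined with the non-negativity of the excess risk $\ell(\bm\theta^0)-\ell(\bar{\bm\theta})\geq 0$ and the calibration $\lambda n^{1/2}[K_{\max}\log K_{\max}]^{-1/2}\to\infty$, the master inequality forces $\lambda(\bar K-K^0)\leq R_n$ with the stochastic part of $R_n$ negligible compared to $\lambda$, yielding $\bar K=O_P(K^0)$ (and in fact $\bar K\leq K^0$ with probability tending to one up to the $\varepsilon_n/\lambda$ term).

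With $\bar K$ tamed, a triangle decomposition finishes the proof:
\[
\|\bar{\bm\theta}-\bm\theta^0\|_1 \leq \|\bar{\bm\theta}-\bm\theta^0(\cdot|\bar{\mathbb T})\|_1 + \|\bm\theta^0(\cdot|\bar{\mathbb T})-\bm\theta^0\|_1.
\]
The variance term is handled by applying Theorem~\ref{th_main} to $\bar{\mathbb T}$ and using $\bar K=O_P(K^0)$, giving $O_P([K^0\log K^0]^{1/2}n^{-1/2}+\varepsilon_n)$. The bias term vanishes on the event $\{\bar K\geq K^0\}$, since the nested structure of the pruning sequence then makes $\bar{\mathbb T}$ a refinement of $\mathbb T^0$ and $\bm\theta^0$ is already constant on the leaves of $\mathbb T^0$; on the complementary event, I would use the quadratic lower bound $\|\bm\theta^0(\cdot|\bar{\mathbb T})-\bm\theta^0\|_1^2 \lesssim \ell(\bm\theta^0)-\ell(\bm\theta^0(\cdot|\bar{\mathbb T})) \leq \ell(\bm\theta^0)-\ell(\bar{\bm\theta})$, which the master inequality bounds by $\lambda K^0+R_n$, absorbed into the target rate.

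The main obstacle is the uniform empirical process control of $R_n$: because $\bar{\mathbb T}$ is data-dependent, it is not enough to invoke Theorem~\ref{th_main} at one tree, and the chaining argument must be adapted to the whole nested pruning family while still propagating the $\varepsilon_n$-bias of the pseudo-observations through the copula log-likelihood via Condition~\ref{cond:2}. The condition $\lambda n^{1/2}[K_{\max}\log K_{\max}]^{-1/2}\to\infty$ is precisely what is needed for the penalty to dominate this worst-case fluctuation, while $\lambda\to 0$ ensures the comparison term $\lambda K^0$ does not destroy the final rate.
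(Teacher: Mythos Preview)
Your oracle-inequality approach differs substantially from the paper's, which proceeds by direct \emph{model selection consistency}: it shows $\mathbb P(\widehat K\neq K^0)\to 0$ by treating the two cases $\widehat K>K^0$ and $\widehat K<K^0$ separately, and then simply applies Theorem~\ref{th_main} on the event $\{\widehat K=K^0\}$. Your master inequality is a legitimate alternative route for the \emph{overfitting} side, and indeed your bound $\lambda(\bar K-K^0)\leq R_n$ together with $R_n=o_P(\lambda)$ correctly yields $\bar K\leq K^0$ with probability tending to one, matching the paper's first case.

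The genuine gap is the \emph{underfitting} side. Your argument never establishes $\bar K\geq K^0$ with probability tending to one, and your fallback for the bias term on $\{\bar K<K^0\}$ does not deliver the claimed rate. Concretely: even granting the (unjustified) quadratic lower bound $\|\bm\theta^0(\cdot|\bar{\mathbb T})-\bm\theta^0\|_1^2\lesssim \ell(\bm\theta^0)-\ell(\bm\theta^0(\cdot|\bar{\mathbb T}))$, the master inequality only gives you $\ell(\bm\theta^0)-\ell(\bar{\bm\theta})\leq \lambda K^0+R_n$, so the bias is bounded by $(\lambda K^0)^{1/2}+O_P(R_n^{1/2})$. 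The hypotheses impose only $\lambda\to 0$ and a \emph{lower} bound on $\lambda$; nothing prevents, say, $\lambda=(\log n)^{-1}$, for which $(\lambda K^0)^{1/2}$ is far larger than $[K^0\log K^0]^{1/2}n^{-1/2}+\varepsilon_n$. So the bias term is not ``absorbed into the target rate.''

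What is missing is exactly the ingredient the paper uses in its second case: the \emph{fixed, strictly positive} population gap $\mathcal L^{*(K^0)}-\mathcal L^{*(K^0-1)}>0$. Because this gap does not shrink with $n$, while $\lambda\to 0$ and the empirical fluctuations are $O_P([K^0\log K^0]^{1/2}n^{-1/2}+\varepsilon_n)$, the penalized criterion cannot prefer any tree with fewer than $K^0$ leaves for $n$ large. Your master inequality, by contrast, throws this gap away through the crude bound $\ell(\bm\theta^0)-\ell(\bar{\bm\theta})\geq 0$, which is precisely why you cannot recover the lower bound on $\bar K$. To complete your approach you would have to reinstate this separation argument---at which point you are essentially reproducing the paper's proof of $\mathbb P(\widehat K<K^0)\to 0$.

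A secondary issue: your claim that on $\{\bar K\geq K^0\}$ the bias vanishes because ``$\bar{\mathbb T}$ is a refinement of $\mathbb T^0$'' tacitly identifies the \emph{empirical} pruning sequence with the \emph{population}-optimal subtree $\mathbb T^0$. This needs justification (the paper sidesteps it by working directly on the event $\{\widehat K=K^0\}$ and identifying $\bm\theta^{0,K^0}$ with $\bm\theta^0$).
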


All the proofs are postponed to the Appendix section.

\section{Empirical evidence}
\label{sec4}

\subsection{Simulation study}
In this section, we present the functioning of the conditional copula analysis on simulated data.

\subsubsection{The regression framework}
We consider a bivariate random variable $\mathbf{U} = (U^{(1)},U^{(2)})$, with uniform margins over $[0,1]$ and  distributed according to an Archimedean copula $C_{\theta(\mathbf{X})}.$ Archimedean copulas is a standard family of copulas often used in modeling applications, \citep[see][]{kularatne2021use, hennessy2002use}. They are determined by a single parameter $\theta \in \mathbb{R}^1$ which is associated with Kendall's $\tau$ coefficient through a bijective relationship \citep{genest}. In our framework, $\theta$, and thus also $\tau$, depends on two covariates $\mathbf{X} = (X^{(1)}, X^{(2)})$, which are random variables uniformly distributed in $[0,1]$. Moreover, we assume that $\mathbf{U}_i$ are  samples of the true cumulative marginal distributions of some bivariate response variables $\mathbf{Y} = (Y^{(1)}, Y^{(2)})$ conditionally on the covariates $\mathbf{X}$. Specifically, we assume normal distributions for these margins, with the mean parameters being a linear function of $\mathbf{X}$.  

The first step of the simulations consists of generating synthetic data for $(\mathbf{X}_i, \theta_i, \tau_i, \mathbf{U}_i, \mathbf{Y}_i)$. Hence, our goal is to estimate the parameters $\theta_i$ (or $\tau_i$) from $(\mathbf{X}_i, \mathbf{Y}_i)$, pretending not to know the true observations $\mathbf{U}_i$, as it is usually the case in a real data scenario. Therefore, as a preliminary step to the conditional copula analysis, we first compute the pseudo-observations. We do that by considering two different approaches, a parametric and a non-parametric one, which will result in two vectors of pseudo-observations, namely $\mathbf{V}$ and $\mathbf{W}$. Eventually, we fit the conditional copula model to both the $\mathbf{V}$ and $\mathbf{W}$ pseudo-observations, other than to the true margins $\mathbf{U}$ for additional comparison. The goodness of the three fits is evaluated against a benchmark model.

\paragraph{Definition of different scenarios}

 
To investigate different scenarios, we consider three Archimedean copulas - the Clayton, Frank, and Gumbel copulas. We also consider three types of dependence between $\tau$ and the covariates $(X^{(1)}, X^{(2)})$, which we report below: 
\begin{itemize}
    \item[(i)] a step-wise function:
    \[
    \tau_i = 
    \begin{cases}
        0.3 & \textrm{if } X^{(1)}_i < 0.4, X^{(2)}_i < 0.75 \\
        0.5 & \textrm{if } X^{(1)}_i \geq 0.4, X^{(2)}_i < 0.75 \\
        0.7 & \textrm{if } X^{(1)}_i < 0.4, X^{(2)}_i \geq 0.75 \\
        0.9 & \textrm{if } X^{(1)}_i \geq 0.4, X^{(2)}_i \geq 0.75
    \end{cases}
    \]
    \item[(ii)] a steep sigmoid:
    \[
    \tau_i = 0.3 - \frac{0.2}{1+\exp(-40(X^{(1)}_i - 0.4))} - \frac{0.4}{1+\exp(-40(X^{(2)}_i - 0.75))}
    \]
    \item[(ii)] a gentle sigmoid:
    \[
    \tau_i = 0.3 - \frac{0.2}{1+\exp(-15(X^{(1)}_i - 0.4))} - \frac{0.4}{1+\exp(-15(X^{(2)}_i - 0.75))}
    \]
\end{itemize}

With these constraints, having fixed the covariates $(X^{(1)}, X^{(2)})$, we obtain nine different conditional copulas, from which we sample $\mathbf{U}$ observations. Let us specify that these conditional copulas are defined such that Kendall's $\tau$ coefficients always range in the interval $[0.3, 0.9]$, to ensure comparability.

Finally, in all scenarios the response variables $\mathbf{Y}$ is defined as follow:
\[
\begin{cases}
        Y_i^{(1)} & =  \Psi^{-1}(U^{(1)}_i - 1-0.2X^{(1)}_i-0.05X^{(2)}_i)\\
        Y_i^{(2)} & =  \Psi^{-1}(U^{(2)}_i-1+0.1X^{(1)}_i-0.2X^{(2)}_i )\\
    \end{cases}
\]
where $\Psi$ is the c.d.f. of the distribution $\mathcal N(0,1)$. 

\paragraph{Pseudo-observation computation}
We consider two alternative methods to compute the pseudo-observations. 

First, in a parametric approach, we assume that the marginal distributions of $Y^{(j)}$ conditionally on $\mathbf{X}$ can be approximated by normal distributions with variance fixed at 1. Thus, we estimate the mean parameter through a linear model, i.e. $\hat{\mu}^{(j)}_i = LM(\mathbf{X}_i)$, and we compute the pseudo-observations $\mathbf{V}^{(j)}_i = \Psi^{-1}(Y^{(j)}_i- \hat{\mu}^{(j)}_i)$.

Second, to avoid assumptions on the form of the margins, we perform a kernel estimation depending on the covariates as defined in \eqref{kernelF}. We consider a simple Gaussian kernel, with the bandwidth $h$ optimized depending on the scenario, specifically, we used $h=0.4$ for Clayton and Frank copulas, $h=0.3$ for the Gumbel copula. This way, the pseudo-observations $\mathbf{W}_i$ are computed as empirical percentiles, where in the calculation of the empirical cumulative distribution function the different observations are weighted differently according to their distance in terms of covariates.

\paragraph{Model's evaluation}
As a reference model, we simply fit the Archimedean copula to $\mathbf{U}$ (and to $\mathbf{V}$ and $\mathbf{W}$), ignoring the additional information carried by the covariates. It means that we estimate a unique value for $\tau$, which corresponds to the estimation provided by the root of the regression tree of the conditional copula model. Hence, the prediction errors of the conditional copula model and of the benchmark model are compared. For comparison, we consider the Mean Squared Errors for both the estimates of the $\tau$ coefficients and the values of the cumulative copula and the log-likelihood values of the models toward the observations/pseudo-observations. 

\paragraph{Simulation results}

For each one of the nine settings presented above, we build 500 triples of datasets, containing 1000 observations $\mathbf{U}_i$, 1000 pseudo-observations $\mathbf{V}_i$, and 1000 pseudo-observations $\mathbf{W}_i$, respectively. Results are presented in Figure \ref{fig:fig1}. In all scenarios, the conditional copula model outperforms the benchmark model, both in terms of log-likelihood values and estimates for the $\tau$ coefficients and for the cumulative copula values.
As expected, the predictions worsen when the dependence on the covariates changes from a step function, which can be perfectly captured by a regression tree, to smoother functions. Finally, no significant changes are noticed when models are fitted to observations or pseudo-observations. We notice that the conditional copula model most of the time identifies five or six groups of observations. That corresponds to a slightly overfitting of the model, as four groups are expected.

\begin{figure}[!ht]
    \centering
    \includegraphics[width=\textwidth]{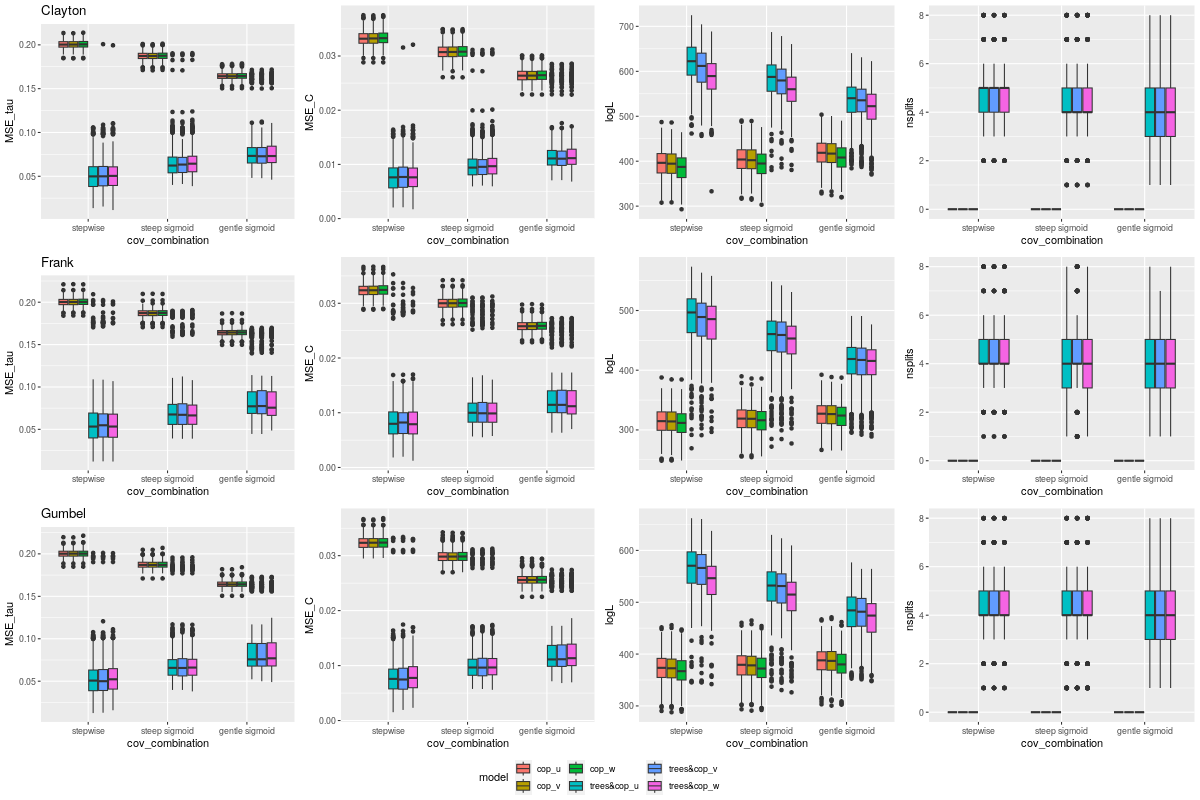}
    \caption{Results of simulations. Results for the Clayton, Frank, and Gumbel copulas are depicted on the different rows. For each copula, the results for the three types of covariate dependence are reported on the x-axis. The six colors identify different models: red, orange, and green are for the conditional copula model fitted on the observations $\mathbf{U}$ and on the pseudo-observations $\mathbf{V}$ and $\mathbf{W}$, respectively, while cyan, blue and magenta are for the benchmark model. In the first two columns, we show results in terms of MSE for the $\tau$ estimates and the cumulative copula estimates, in the third column in terms of log-likelihood. In the fourth column, we report the distributions of the number of splits, i.e. the number of leaves minus 1, identified by the regression trees of the conditional copula models. Each boxplot represents the results for 500 datasets of 1000 points each.}
    \label{fig:fig1}
\end{figure}

\subsection{Real data example}
\label{sec:real_data_example}
In this section, we present an application of the conditional copula model on epidemiological data of cases of human influenza.

\subsubsection{The human influenza: context and data}
\label{sec:flunet_data}
Three main influenza strains co-circulate worldwide and infect humans: influenza A/H1N1pdm, influenza A/H3N2, and influenza B. The relative proportions of the three viruses are highly variable in time and space and the unpredictability of the strains' (co-)dominance patterns poses a major limitation to the mitigation of the upcoming epidemic wave in terms of intervention design and vaccination. Here, we use the conditional copula model to capture some trends of the coupled dynamic of influenza subtypes. In particular, first, we assume that we can use Archimedean copulas to describe the dependence structure of the relative abundances of influenza subtypes across regions and years. Second, we implement the conditional copula model to identify spatio-temporal patterns of such dependence structure. 

The World Health Organisation provides data on influenza surveillance for several countries, consisting of weekly counts of cases classified by subtype \citep{gisrs2022, flahault1998flunet}. We consider data from 80 countries that reported a minimum of 50 classified cases per year in the period from April 2010 to April 2019. Then, we aggregate counts annually (from April to April) and for each country-year (800 observations in all) we compute the proportion of cases of A/H1N1pdm, A/H3N2, and B. We consider the relative abundances of subtypes as the response variables to be modeled with an Archimedean copula, testing Clayton, Frank, and Gumbel families, and the year and the Influenza Transmission Zone (ITZ) as the relevant covariates. The ITZs are 18 groups of countries with similar influenza transmission patterns identified by the W.H.O. \citep[for the precise definition of the groups see][]{who2018}.
Before fitting the conditional copula model, we perform a preprocessing step by applying an additive log-ratio transformation to the relative proportion of subtypes \citep{aitchison1982statistical}. This is a common procedure when working with percentage data \citep{jackson1997compositional, filzmoser2009univariate, buccianti2015exploring}, and it allows us to map bounded vectors (A/H1N1pdm \%, A/H3N2 \%, B \%)$ \in S^3$ into unbounded vectors $(Y^{(1)},Y^{(2)}) \in \mathbb{R}^2$, where $S^3$ is the so-called 3-part simplex. In particular, we use the isometric log-ratio transformation proposed by \citet{egozcue2003isometric}:
\[
\begin{cases} Y^{(1)} & = \sqrt{\frac{2}{3}} \ln{\frac{B\%}{\sqrt{A \backslash H1N1pdm\%  *  A \backslash H3N2\%}}} \\
Y^{(2)} & = \sqrt{\frac{1}{2}} \ln{\frac{A \backslash H1N1pdm\%}{A \backslash H3N2\%}} 
\end{cases}
\]
Thus, the actual response variable is $\mathbf{Y}=(Y^{(1)},Y^{(2)})$, with $Y^{(1)}$ describing the relative abundance between influenza B and the average proportion of influenza A subtypes, while $Y^{(2)}$ denotes the relative amount of A/H1N1pdm and A/H3N2.

\subsubsection{Model implementation}

\paragraph{Estimation of the margins.}


We consider regression trees to model the relationship between the response variables $Y^{(j)}$ and the covariates year and ITZ. 
Both the covariates are treated as categorical variables, meaning that a priori values have no precise sorting and an ordering step is needed preliminary to the split search, as explained in Remark~\ref{r_qual}. This allows maximum flexibility to the splitting procedure so that the tree can effectively capture the trends in the data.
Once the trees are optimized by cross-validation, the pseudo-observations $\widehat{\mathbf{U}}^{(j)}$ are computed from a mixture of empirical cumulative distribution functions defined over the groups of points identified by the optimal tree. That is 
\[
\widehat{\mathbf{U}}^{(j)}(t^{(j)}|\mathbf{X}) = \sum_{l=1}^{K} \left(\frac{1}{n_l} \sum_{i=1}^{n_l} \mathbf{1}_{Y^{(j)}_i \leq t^{(j)}} \mathbf{1}_{\mathbf{X}_i\in \mathcal{T}_l^{(j)}}\right),
\]
with $\mathcal{T}_{l=1,\dots,K}^{(j)}$ being the terminal nodes of the tree fitted on the $Y^{(j)}$ response variable.


\BlankLine
\paragraph{Fit of the conditional copula model.}
We test two-dimensional Clayton, Frank, and Gumbel copulas to model $\widehat{\mathbf{U}}$ conditionally on the covariates year and ITZ, again treated as categorical covariates. 
We implement a 3-fold cross-validation repeated 50 times to optimize the pruning of the trees with Breiman's rule used to identify the optimal tree. The best conditional model is the one with the Frank copula, which leads to the highest value of log-likelihood. It results in a tree with five leaves (Figure \ref{fig:fig2}) which will be discussed in the next paragraph.

\begin{figure}[!ht]
    \centering
    \includegraphics[width=0.6\textwidth]{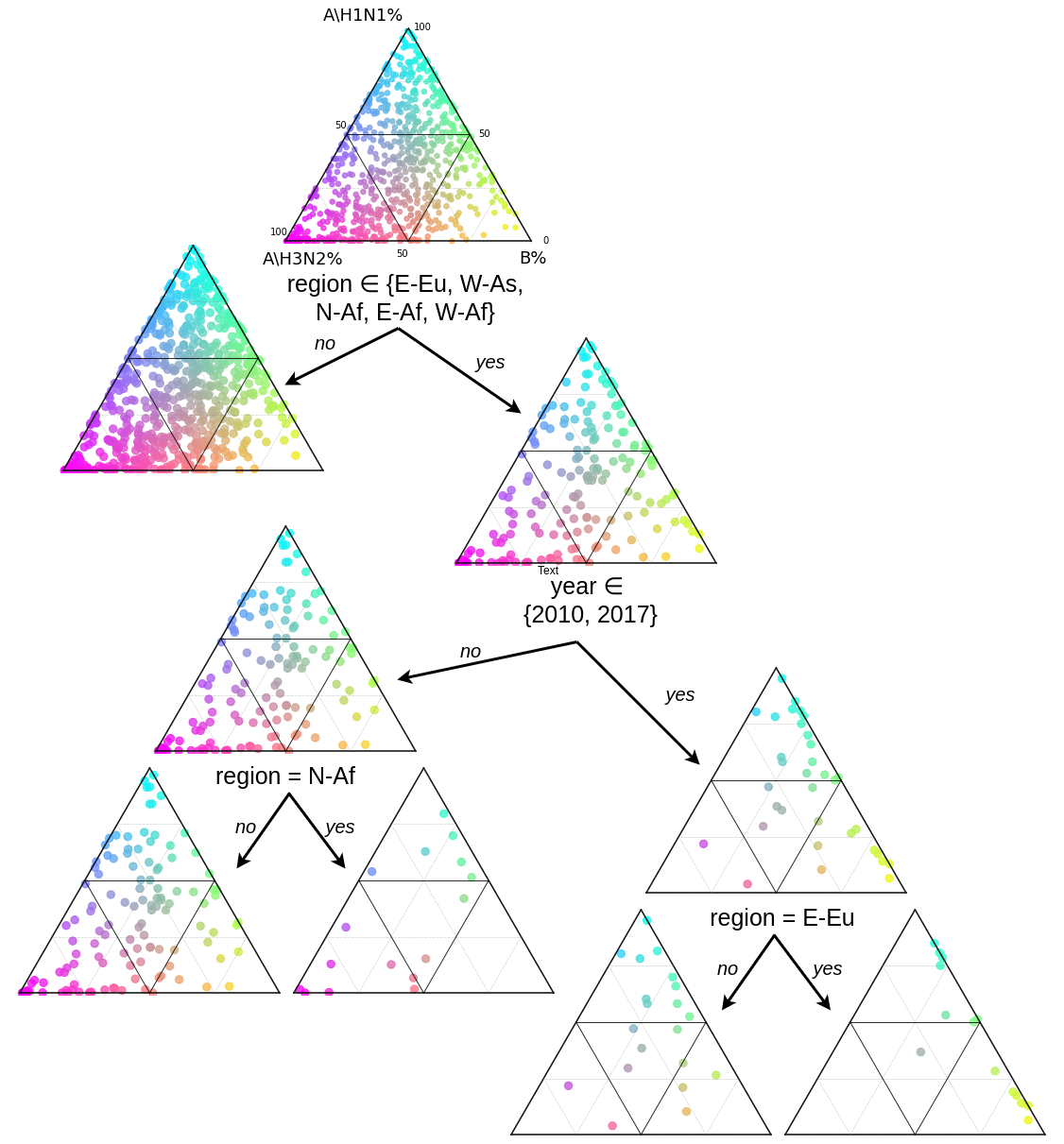}
    \caption{Optimal tree identified by the Frank conditional copula model applied to data of relative abundances of influenza subtypes across countries and regions. Data on the relative abundances of influenza subtypes are considered for 800 countries-years (corresponding to the 800 points in the top ternary plot). Similarly, for each node of the tree, a simplex represents the subtype relative abundances of the countries-years clustered in the node. We use a ternary color code to distinguish countries-years with dominance of A/H1N1pdm (cyan), A/H3N2 (pink), and B (yellow). 
    For each split, the condition used to partition the observations is indicated. 
    From top-left to bottom-right, the number of observations in each leaf is 630, 120, 16, 20 and 14. In the same order, Kendall's $\tau$ coefficients equal -0.06, -0.09, 0.3, -0.03, and 0.25.}
    \label{fig:fig2}
\end{figure}
\subsubsection{Results and discussions}

Thanks to the conditional copula model we were able to ameliorate the adjustment to the data; the log-likelihood of the Frank simple copula on all the 800 $\widehat{\mathbf{U}}$ pseudo-observations was 1.2, and it increased to 13.8 for the Frank copula mixture model identified by the optimal tree.

In the estimation of the response variables $Y^{(1)}$ and $Y^{(2)}$, the years are used more often than the regions to perform the splits, indicating that the relative abundances of B vs. A and A/H1N1pdm vs. A/H3N2, taken independently, varied more in time than in space (see Figures \ref{fig:tree_margins} in the Appendix). In other words, to a first approximation, we find consistent temporal dynamics worldwide, going a step further we also identify significant spatial patterns. 
It is interesting to note that each time the spatial information is used to perform the split, European regions are grouped together, sometimes with other neighboring regions (mainly North Africa and Western and Central Asia), and always separated from countries of the southern hemisphere. These spatial country groupings overall match well the geographical clustering found in other studies with different methods. Previous studies found evidence for an annual reseeding of influenza viruses from tropical and subtropical countries to temperate regions, especially for A/H3N2 viruses  \citep[see][]{bedford2015global, lemey2014unifying, bahl2011temporally, le2013migration}. These dynamics could also contribute to determining the patterns in subtype compositions that emerged from our analysis. However, our purely descriptive analysis does not allow us to speculate on any underlying mechanism. 

Once $\widehat{U}^{(1)}$ and $\widehat{U}^{(2)}$ are computed, the conditional copula model identifies significant changes in the pseudo-observations dependence across space and time. It results in a tree with five leaves, characterized by different degrees of correlation (Kendall's $\tau$ among the $\widehat{U}^{(1)}$ and $\widehat{U}^{(1)}$ pseudo-observations range from -0.09 to 0.3). 
However, we note that a single leaf contains most of the data points (630 out of 800), meaning that the simplifying assumption would probably provide a reasonable approximation for the majority of the countries-years in our analysis. However, the other leaves allow us to refine the fit of the data and further separate a few country years that are mainly characterized by a proportion of B infections higher than the average.

\section{Conclusion}

In this paper, we proposed a new methodology to model conditional copulas, based on regression trees. The technique requires the assumption that the conditional copulas all belong to the same family of parametric copulas, with the association parameter changing with the value of the covariates. The procedure presents many advantages. First, the tree structure theoretically allows capturing any form of the conditional association parameter. Second, the simplicity of the final model, if restricted to a single leaf of the tree, allows one to obtain a tractable output. We note that our approach allows a relaxation of the simplifying assumption \citep[see][]{derumigny2017tests}, but this remains valid for each of the subsets of data identified by the tree.
Another interesting feature is the ability to deal with discrete and/or continuous covariates. 

In addition, let us point out that this method can be easily extended to the case where several families of copulas are tested at each node. This would give a more complex final structure, since not only the association parameter but also the copula family could vary from one leaf to another.  However, it increases the complexity of the implementation of the algorithm. Finally, let us note that the potential weakness of the procedure is its instability. Like every regression tree procedure, our method can be very sensitive to new incoming data, as new information may considerably change the structure of the tree and the classes that are made. Careful attention should be given to this aspect. On the other hand, a direct extension that could reduce this instability would be to consider the corresponding random forest algorithm, that is, computing many small copula trees on separate bootstrap samples, and then aggregating them. The aggregation of these trees would be a way to stabilize the result, but of course, would reduce the interpretability of the model.

\paragraph{R codes:} The \texttt{R} codes are publicly available at \url{https://github.com/FrancescoBonacina/tree-based-conditional-copula-estimation}.

\section{Appendix}

The Appendix section is organized as follows. We first start with preliminary results that are needed to prove our results in Section~\ref{sec:prem:res}, including some results on the complexity of the class of functions defined by the model in Section~\ref{sec:bracket}, and section~\ref{sec:proof_pseudo} provides a general result that will be used repeatedly to handle deviations of the score function. We then prove  Proposition~\ref{prop_consistency} in Section~\ref{result_consistency}, Theorem~\ref{th_main} in Section~\ref{proof_th_main}, and Theorem~\ref{th_main2} in Section~\ref{proof_oracle}. Results on the convergence rates of the margins are then shown in Sections~\ref{sec:margins1} and~\ref{sec:margins2}.

\subsection{Preliminary results}\label{sec:prem:res}
In all this section, let us denote 
\[
\mathfrak{F}=\left\{(\mathbf{u},\mathbf{x}) \mapsto \phi(\mathbf{u};\mathbf{x})=\sum_{\ell=1}^K \varphi_\ell(\mathbf{u})\mathbf{1}_{\mathbf{x}\in \mathcal{T}_\ell} \text{ with for $\ell=1,\ldots, K$, } \varphi_\ell\in \mathcal{F} \text{ satisfying Condition~\ref{cond:1}} \right\},
\]

and, for $\phi\in \mathfrak{F},$ $,$ and
\[
\mathcal{Z}(\phi) = \mathbb{E}\left[\phi(\mathbf{U} ; \mathbf{X})\right] \, , \quad \mathcal{Z}^*_n(\phi) = \frac{1}{n}\sum_{i=1}^n \phi(\mathbf{U}_i ; \mathbf{X}_i) \quad \text{and} \quad \widehat{\mathcal{Z}}_n(\phi) = \frac{1}{n}\sum_{i=1}^n \phi(\widehat{ \mathbf{U}}_i ; \mathbf{X}_i) \, .
\]

\subsubsection{Bracketing numbers}
\label{sec:bracket}

We first introduce the concept of bracketing numbers to measure the complexity of a class of functions $\mathcal{F}$. For $\varepsilon>0$, a $\varepsilon-$bracket $[\mathfrak{a},\mathfrak{b}]$ is the set of functions $f$ such that for all $ \mathbf{x} \in \mathbb{R}^d$,  $\mathbf{u} \in \mathbb{R}^k$, $\mathfrak{a}(\mathbf{u},\mathbf{x})\leq f(\mathbf{u},\mathbf{x})\leq \mathfrak{b}(\mathbf{u},\mathbf{x}),$ with the condition that
\[
\int (\mathfrak{a}(\mathbf{u},\mathbf{x})-\mathfrak{b}(\mathbf{u},\mathbf{x}))^2\mathrm{d}\mathbb{P}(\mathbf{u},\mathbf{x})\leq \varepsilon^2  \, .
\]
We then define $\mathcal{N}(\varepsilon,\mathcal{F})$ as the minimal number of $\varepsilon$-brackets required to cover the class of functions $\mathcal{F}.$ More details on bracketing numbers can be found in \citep[][Chapter 19]{vandervaart}, and \cite[][Chapter 2.2]{vandervaartwellner}.

\begin{lemma}
\label{lemma_GC}
For $\varepsilon>0$,
\[
\mathcal{N}(\varepsilon,\mathfrak{F})\leq \left(\frac{K^{m/2}C_1\|\Phi\|_2^m}{\varepsilon^m}\right)^K,
\]
for some constant $C_1$ depending only on $\Theta$ and $m$.
\end{lemma}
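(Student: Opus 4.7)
The proof splits naturally into two parts: first, bound the bracketing number of the single-leaf class $\mathcal{F}$ using Condition~\ref{cond:1}; second, bound the bracketing number of the composite class $\mathfrak{F}$ by taking products of brackets, one per leaf, while exploiting the disjointness of the partition to avoid a $K$-fold blow-up in the $L^2$ bracket size.

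For the first step, I would start by covering the parameter set $\Theta\subset\mathbb{R}^m$ with a minimal $\eta$-net in the $\|\cdot\|_1$ norm; since $\Theta$ is compact with diameter $\mathrm{diam}(\Theta)$, a standard volume argument gives at most $(c\,\mathrm{diam}(\Theta)/\eta)^m$ points $\bm\theta_1,\dots,\bm\theta_N$. Around each $\bm\theta_k$, Condition~\ref{cond:1} gives the natural bracket
\[
\mathfrak{a}_k(\mathbf{u})=\varphi_{\bm\theta_k}(\mathbf{u})-\eta B(\mathbf{u}),\qquad \mathfrak{b}_k(\mathbf{u})=\varphi_{\bm\theta_k}(\mathbf{u})+\eta B(\mathbf{u}),
\]
whose $L^2$-width is $2\eta\|B\|_2$. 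Since the envelope given in Condition~\ref{cond:1} satisfies $\|\Phi\|_2\geq \mathrm{diam}(\Theta)\|B\|_2-\|\varphi_{\widetilde{\bm\theta}}\|_2$, choosing $\eta$ proportional to $\varepsilon/\|B\|_2$ yields $\mathcal{N}(\varepsilon,\mathcal{F})\leq (C_1\|\Phi\|_2/\varepsilon)^m$ for a constant $C_1$ depending only on $m$ and on $\Theta$.

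For the second step, observe that any $\phi\in\mathfrak{F}$ is determined by the $K$-tuple $(\varphi_1,\dots,\varphi_K)\in \mathcal{F}^K$ together with the fixed partition $(\mathcal{T}_\ell)$. Given brackets $[\mathfrak{a}_\ell,\mathfrak{b}_\ell]$ of $L^2$-size $\delta$ covering $\mathcal{F}$, I would form the composite brackets
\[
\mathfrak{A}(\mathbf{u},\mathbf{x})=\sum_{\ell=1}^K \mathfrak{a}_\ell(\mathbf{u})\mathbf{1}_{\mathbf{x}\in\mathcal{T}_\ell},\qquad \mathfrak{B}(\mathbf{u},\mathbf{x})=\sum_{\ell=1}^K \mathfrak{b}_\ell(\mathbf{u})\mathbf{1}_{\mathbf{x}\in\mathcal{T}_\ell}.
\]
The key observation is that because the indicators $\mathbf{1}_{\mathbf{x}\in\mathcal{T}_\ell}$ are supported on disjoint sets, cross-terms vanish and
\[
\int(\mathfrak{B}-\mathfrak{A})^2\,\mathrm{d}\mathbb{P}=\sum_{\ell=1}^K \int(\mathfrak{b}_\ell-\mathfrak{a}_\ell)^2\mathbf{1}_{\mathbf{x}\in\mathcal{T}_\ell}\,\mathrm{d}\mathbb{P}\leq K\delta^2.
\]
To obtain an overall bracket size $\varepsilon$, it therefore suffices to take $\delta=\varepsilon/\sqrt{K}$, which costs $\mathcal{N}(\varepsilon/\sqrt{K},\mathcal{F})^K\leq (C_1 \sqrt{K}\|\Phi\|_2/\varepsilon)^{mK}$ brackets, giving exactly the claimed bound after rearranging the exponent.

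I expect the only substantive point to pin down cleanly is the reduction of $\|B\|_2\,\mathrm{diam}(\Theta)$ to $\|\Phi\|_2$ in the first step (to obtain the envelope-flavored constant stated in the lemma), and the explicit bookkeeping of the universal constant $C_1$, which depends only on $m$ and on the geometry of $\Theta$. Everything else is a direct consequence of the Lipschitz condition and of the crucial cancellation coming from the disjoint partition, without which one would pay $K$ instead of $\sqrt{K}$ inside the exponent.
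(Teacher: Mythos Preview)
Your proposal is correct and follows essentially the same approach as the paper: first bound $\mathcal{N}(\varepsilon,\mathcal{F})$ via the Lipschitz Condition~\ref{cond:1} (the paper simply cites \citep[Example 19.7]{vandervaart} rather than writing out the $\eta$-net construction), then form composite brackets $\sum_\ell \mathfrak{a}_\ell \mathbf{1}_{\mathcal{T}_\ell}$ and use disjointness of the partition to get $\int(\mathfrak{B}-\mathfrak{A})^2\,\mathrm{d}\mathbb{P}\leq K\delta^2$, forcing $\delta=\varepsilon/\sqrt{K}$. Your worry about absorbing $\mathrm{diam}(\Theta)\|B\|_2$ into $\|\Phi\|_2$ is easily resolved since the envelope in Condition~\ref{cond:1} satisfies $\Phi\geq \mathrm{diam}(\Theta)B$ pointwise.
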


\begin{proof}

Consider an element $\phi\in \mathfrak{F}.$ It can be written as
$\phi=\sum_{\ell=1}^K \varphi_\ell\mathbf{1}_{\mathbf{x}\in \mathcal{T}_\ell},$
where each $\varphi_\ell$ is in $\mathcal{F},$ and satisfies Condition~\ref{cond:1}. Then, from \citep[][Example 19.7]{vandervaart}, for each $\ell=1,\ldots, K$, for all $\varepsilon >0$
\[
\mathcal{N}(\varepsilon,\mathcal{F})\leq \frac{C_1(m,\Theta)\|\Phi\|_2^m}{\varepsilon^m},
\]
where $C_1$ is a constant depending on $\mathrm{diam}(\Theta)$ and $m$.

Therefore, for each $\varphi_\ell$, the set of  $\varepsilon K^{-1/2}$-brackets $[a_{i(\ell)},b_{i(\ell)}]$ for $i=1, \ldots, K^{m/2}C_1(m,\Theta)\|\Phi\|_2^m\varepsilon^{-m}$  with $a_{i(\ell)}\leq b_{i(\ell)}$ covers $\mathcal F$. 

Now, for $i=1, \ldots, K^{m/2}C_1(m,\Theta)\|\Phi\|_2^m\varepsilon^{-m}$, define
\begin{equation}
\mathfrak{a}_i(\mathbf{u}, \mathbf{x})=\sum_{\ell=1}^K a_{i(\ell)}(\mathbf{u})\mathbf{1}_{\mathbf{x}\in \mathcal{T}_\ell} \quad ; \quad \mathfrak{b}_i(\mathbf{u}, \mathbf{x})=\sum_{\ell=1}^K b_{i(\ell)}(\mathbf{u})\mathbf{1}_{\mathbf{x}\in \mathcal{T}_\ell}. \label{new_brackets}
\end{equation}
Clearly,  $\mathfrak{a}_i\leq \mathfrak{b}_i$ and $\phi\in [\mathfrak{a}_i,\mathfrak{b}_i].$ Moreover,
\[
\int (\mathfrak{b}_i(\mathbf{u},\mathbf{x})-\mathfrak{a}_i(\mathbf{u},\mathbf{x}))^2 \mathrm{d}\mathbb{P}(\mathbf{u},\mathbf{x})\leq \sum_{\ell=1}^K \int (b_{i(\ell)}(\mathbf{u})-a_{i(\ell)}(\mathbf{u}))^2  \mathrm{d}\mathbb{P}(\mathbf{u})\leq \varepsilon^2.
\]

Thus, the set of brackets $[\mathfrak{a}_i,\mathfrak{b}_i]$, for $i=1, \ldots, K^{m/2}C_1(m,\Theta)\|\Phi\|_2^m\varepsilon^{-m}$ defined in \eqref{new_brackets} and deduced from the brackets $[a_{i(\ell)},b_{i(\ell)}]$ are $\varepsilon$-brackets covering $\mathfrak{F},$ and their number is less that
\[
\left(\frac{K^{m/2}C_1(m,\Theta)\|\Phi\|_2^m}{\varepsilon^m}\right)^K,
\]
leading to the result.

\end{proof}

\subsubsection{General results on sums involving pseudo-observations}
\label{sec:proof_pseudo}

The first result of this section shows how to replace pseudo-observations  $\mathbf{U}_i$ by their estimated version $\widehat{\mathbf{U}}_i$ in studying the asymptotic behavior of sums involving these quantities. Going back to $\mathbf{U}_i$ then simplifies considerably the study of such quantities, since one goes back to classical i.i.d. quantities.

\begin{lemma}
\label{lemma_cpobs}
Assume furthermore that  there exist $0\leq \beta_1,\beta_3< 1/2,$,  $0\leq 1 \beta_2 <1$ and two universal constants $A_1$ and $A_2$ such that for all $\varphi\in \mathcal{F}$ satisfies Condition~\ref{cond:2}. 

Then, under Assumption~\ref{a:pseudo}, 
\[
\sup_{\phi\in \mathfrak{F}} \left|\widehat{\mathcal Z}_n(\phi) - \mathcal Z^*_n(\phi) \right| 
=O_P(\varepsilon_n),
\]
with $\varepsilon_n$ tends to 0 when $n$ tends to $\infty$.
\end{lemma}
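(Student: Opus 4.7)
The plan is to bound the target quantity pointwise in $\phi\in\mathfrak{F}$ by an expression that does not depend on $\phi$ (apart from universal constants coming from Condition~\ref{cond:2}), and then to conclude by a law of large numbers / Markov argument on an i.i.d.\ sum. First, because $(\mathcal{T}_\ell)_{\ell=1,\ldots,K}$ partitions $\mathcal{X}$, for each $i$ only one indicator $\mathbf{1}_{\mathbf{X}_i\in\mathcal{T}_\ell}$ is nonzero, so the difference $\phi(\widehat{\mathbf{U}}_i;\mathbf{X}_i)-\phi(\mathbf{U}_i;\mathbf{X}_i)$ collapses to a single term $\varphi_{\ell(i)}(\widehat{\mathbf{U}}_i)-\varphi_{\ell(i)}(\mathbf{U}_i)$. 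By the mean value theorem in $\mathbb{R}^k$, this equals $\sum_{j=1}^k\partial_j\varphi_{\ell(i)}(\widetilde{\mathbf{U}}_i)(\widehat{U}_i^{(j)}-U_i^{(j)})$ for some $\widetilde{\mathbf{U}}_i$ on the segment joining $\mathbf{U}_i$ and $\widehat{\mathbf{U}}_i$.

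Next I would invoke part (2) of Condition~\ref{cond:2} on $\mathcal{F}$ to obtain $|\partial_j\varphi_{\ell(i)}(\widetilde{\mathbf{U}}_i)|\leq A_2[\widetilde{U}_i^{(j)}(1-\widetilde{U}_i^{(j)})]^{-\beta_2}\sum_{r=1}^k[\widetilde{U}_i^{(r)}(1-\widetilde{U}_i^{(r)})]^{-\beta_3}$. The key technical step is to replace $\widetilde{\mathbf{U}}_i$ by $\mathbf{U}_i$ at the cost of an $O_P(1)$ multiplicative constant, using \eqref{a11}: the two ratios $U_i^{(j)}/\widehat{U}_i^{(j)}$ and $(1-U_i^{(j)})/(1-\widehat{U}_i^{(j)})$ are positive and their sum is uniformly $O_P(1)$ in $i,j$, hence so are each individually and their reciprocals; since $\widetilde{U}_i^{(j)}$ lies between $U_i^{(j)}$ and $\widehat{U}_i^{(j)}$, the corresponding ratios with $\widetilde{U}_i^{(j)}$ are also uniformly $O_P(1)$. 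Combining this with \eqref{a12} to control $|\widehat{U}_i^{(j)}-U_i^{(j)}|\leq O_P(\varepsilon_n)[U_i^{(j)}(1-U_i^{(j)})]^{\alpha}$ yields a bound of the form
\[
|\phi(\widehat{\mathbf{U}}_i;\mathbf{X}_i)-\phi(\mathbf{U}_i;\mathbf{X}_i)|\leq O_P(\varepsilon_n)\sum_{j,r=1}^k\frac{[U_i^{(j)}(1-U_i^{(j)})]^{\alpha-\beta_2}}{[U_i^{(r)}(1-U_i^{(r)})]^{\beta_3}},
\]
where the hidden constants in $O_P(\varepsilon_n)$ and $O_P(1)$ are uniform in $i$ and, crucially, uniform in $\phi\in\mathfrak{F}$ because $A_2,\beta_2,\beta_3$ are universal over the whole class.

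Finally, averaging over $i$ and applying Markov's inequality, it suffices that the summands on the right-hand side have finite expectations. Since each $U_i^{(j)}$ is marginally uniform on $[0,1]$, a Cauchy--Schwarz applied term-by-term reduces the required integrability to $\int_0^1[u(1-u)]^{2(\alpha-\beta_2)}\mathrm{d}u<\infty$ and $\int_0^1[u(1-u)]^{-2\beta_3}\mathrm{d}u<\infty$, both of which follow from the admissible ranges $\beta_3<1/2$ and the interplay between $\alpha$ and $\beta_2$ enforced by Assumption~\ref{a:pseudo} and Condition~\ref{cond:2}. The main obstacle in the argument is the transfer from $\widetilde{\mathbf{U}}_i$ to $\mathbf{U}_i$ inside the gradient envelope: once \eqref{a11} is carefully unpacked to justify the uniform control of the boundary ratios, the rest of the proof reduces to routine moment bookkeeping based on Condition~\ref{cond:2}.
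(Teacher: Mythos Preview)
Your proposal is correct and follows the same route as the paper's proof: Taylor/mean-value expansion, the derivative envelope from Condition~\ref{cond:2}, Assumption~\ref{a:pseudo}\,\eqref{a11} to transfer the intermediate point $\widetilde{\mathbf U}_i$ back to $\mathbf U_i$, Assumption~\ref{a:pseudo}\,\eqref{a12} to extract the uniform $O_P(\varepsilon_n)$ factor, and then Cauchy--Schwarz plus the moment bounds on uniform margins ($\beta_3<1/2$) to control the remaining i.i.d.\ average. The only cosmetic difference is that the paper expands coordinate-by-coordinate (so that only the $j$-th component sits at the intermediate value while the others remain at $U_i^{(r)}$), which spares a few $\widetilde U\to U$ conversions; as a small aside, your claim that the \emph{reciprocals} $\widehat U_i^{(j)}/U_i^{(j)}$ are $O_P(1)$ does not follow from \eqref{a11} and is not needed---what you actually use, $U_i^{(j)}/\widetilde U_i^{(j)}\le \max(1,U_i^{(j)}/\widehat U_i^{(j)})=O_P(1)$, holds directly since $\widetilde U_i^{(j)}\ge \min(U_i^{(j)},\widehat U_i^{(j)})$.
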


\begin{proof}
First, recall that 
\[
\sup_{\phi\in \mathfrak{F}} \left|\widehat{\mathcal Z}_n(\phi) - \mathcal Z^*_n(\phi) \right| = \sup_{\phi\in \mathfrak{F}}\left|\frac{1}{n}\sum_{i=1}^n \left\{\phi(\widehat{\mathbf{U}}_i;\mathbf{X}_i)-\phi(\mathbf{U}_i;\mathbf{X}_i)\right\}\right| \, .
\]
Then, from a Taylor expansion,
\[
\frac{1}{n}\sum_{i=1}^n \left\{\phi(\mathbf{\widehat{U}}_i;\mathbf{X}_i)-\phi(\mathbf{U}_i;\mathbf{X}_i)\right\}=\frac{1}{n}\sum_{i=1}^n \sum_{j=1}^k \partial_j \phi(U_{i}^{(1)},\ldots, \widetilde{U}_i^{(j)},\ldots,U_i^{(k)};\mathbf{X}_i)\left[\widehat{U}_i^{(j)}-U_i^{(j)}\right],
\]
where $\widetilde{U}_i^{(j)}$ is between $U_i^{(j)}$ and $\widehat{U}_i^{(j)}.$
From Condition~\ref{cond:2},
\[
|\partial_j\varphi(\mathbf{u})| \leq  \frac{A_2}{[u^{(j)}(1-u^{(j)})]^{\beta_2}} \sum_{r=1}^k \frac{1}{[u^{(r)}(1-u^{(r)})]^{\beta_3}}, \quad  j\in \{1,\ldots,k\},
\]
Thus, 
\[
|\partial_j \phi(U_{i}^{(1)},\ldots, \widetilde{U}_i^{(j)},\ldots,U_i^{(k)};\mathbf{X}_i)| \leq \frac{A_2}{[\tilde{U}_i^{(j)}(1-\tilde{U}_i^{(j)})]^{\beta_2}}\times \left\{\sum_{r\neq j}\frac{1}{[{U}_i^{(r)}(1-{U}_i^{(r)})]^{{\beta_3}}}+\frac{1}{[\tilde{U}_i^{(j)}(1-\tilde{U}_i^{(j)})]^{{\beta_3}}}\right\}.
\]
Note that
\[
\frac{1}{\widetilde{U}_i^{(j)}(1-\widetilde{U}_i^{(j)})}\leq \sup_{i=1,\ldots,n} \left(\frac{U^{(j)}_i}{\widehat{U}^{(j)}_i} +\frac{1-U^{(j)}_i}{1-\widehat{U}^{(j)}_i} \right) \frac{1}{U_i^{(j)}(1-U_i^{(j)})},
\]
leading to
\[
|\partial_j \phi(U_{i}^{(1)},\ldots, \widetilde{U}_i^{(j)},\ldots,U_i^{(k)};\mathbf{X}_i)| \leq 
\frac{ A_2[\max(1,\left(\frac{U^{(j)}_i}{\widehat{U}^{(j)}_i} +\frac{1-U^{(j)}_i}{1-\widehat{U}^{(j)}_i} \right) ]^{\beta_3}}{[{U}_i^{(j)}(1-{U}_i^{(j)})]^{\beta_2}}\left\{\sum_{r=1}^k\frac{1}{[{U}_i^{(r)}(1-{U}_i^{(r)})]^{{\beta_3}}}\right\}.
\]
Let 
\[
Z_i=\sum_{r=1}^k\frac{1}{[U_i^{(r)}(1-U_i^{(r)})]^{\beta_3}}.
\]
Since $\beta_3<1/2,$ $E[Z_i^2]<\infty.$
Hence,
\begin{eqnarray*}
\left|\widehat{\mathcal Z}_n(\phi) - \mathcal Z^*_n(\phi)\right| &\leq & \frac{A_2}{n} \sum_{j=1}^k \sum_{i=1}^n \frac{Z_i}{[{U}_i^{(j)}(1-{U}_i^{(j)})]^{\beta_2-\beta'}} \sup_{\substack{i=1,\ldots,n\\{j=1,\ldots,k}}}\left|\frac{\widehat{U}^{(j)}_i-U^{(j)}_i}{\left[U_i^{(j)}(1-U_i^{(j)})\right]^{\beta'}}\right| \\
&& \times \sup_{\substack{i=1,\ldots,n\\{j=1,\ldots,k}}}\left(\frac{U^{(j)}_i}{\widehat{U}_i^{(j)}}+\frac{1-U^{(j)}_i}{1-\widehat{U}^{(j)}_i}\right)^{\beta_3}
\end{eqnarray*}
with $\beta'=\min(\beta_3,\alpha)$. Then, first, from Cauchy-Schwarz inequality,
\[
\mathbb{E}\left[\left\{\frac{Z_i}{[{U}_i^{(j)}(1-{U}_i^{(j)})]^{\beta_2-\beta'}}\right\}^2\right]\leq \mathbb{E}[Z_i^2]^{1/2}\mathbb{E}\left[\frac{1}{[{U}_i^{(j)}(1-{U}_i^{(j)})]^{2[\beta_2-\beta']}}\right]^{1/2} <\infty \,.
\]
Second, from Assumption~\ref{a:pseudo}, 
\[
\sup_{\substack{i=1,\ldots,n\\{j=1,\ldots,k}}}\left|\frac{\widehat{U}^{(j)}_i-U^{(j)}_i}{\left[U_i^{(j)}(1-U_i^{(j)})\right]^{\beta'}}\right| = O_p(\varepsilon_n) \, ,
\]
and
\[
\sup_{\substack{i=1,\ldots,n\\{j=1,\ldots,k}}}\left|\frac{U^{(j)}_i}{\widehat{U}_i^{(j)}}+\frac{1-U^{(j)}_i}{1-\widehat{U}^{(j)}_i}\right|^{\beta_3} = O_P(1) \, .
\]


\end{proof}

\begin{remark}
\label{rem_trimming} If (\ref{a11}) does not hold, the estimation procedure can be modified by introducing some trimming, that is multiplying each term of the log-likelihood by $\mathbf{1}_{\min(1-\hat{U}_i^{(j)},\hat{U}_i^{(j)})\geq \eta_n}.$ If $\eta_n$ tends to zero slower than $\varepsilon_n,$ $\hat{U}_i^{(j)}
\geq U_i^{(j)}/2$ for $n$ large enough due to (\ref{a12}) for the indexes $i$ where this indicator function is not zero. However, the introduction of trimming induces some bias for the estimator, which can be controlled thanks to Assumption \ref{assum:llk}.
\end{remark}

With at hand Lemma~\ref{lemma_cpobs} and the complexity bound of Lemma~\ref{lemma_GC}, one can derive the main result of this section, that will be used several times in the proof of our main theorems.

\begin{proposition}
\label{p1}
Assume furthermore that  there exist $0\leq \beta_1,\beta_3< 1/2$,  $0\leq / \beta_2 <1$ and two universal constants $A_1$ and $A_2$ such that for all $\varphi\in \mathcal{F}$ satisfies Condition~\ref{cond:2}. Then,
\[
\sup_{\phi\in \mathfrak{F}}\left|\widehat{\mathcal{Z}}_n(\phi)-\mathcal{Z}(\phi)\right|=O_P\left(\sqrt{\frac{K\log K}{n}}+\varepsilon_n\right).
\]
\end{proposition}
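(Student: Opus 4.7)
The plan is to split the empirical process via the triangle inequality
\[
\sup_{\phi\in\mathfrak{F}}\bigl|\widehat{\mathcal Z}_n(\phi)-\mathcal Z(\phi)\bigr|
\;\le\;
\sup_{\phi\in\mathfrak{F}}\bigl|\widehat{\mathcal Z}_n(\phi)-\mathcal Z^*_n(\phi)\bigr|
\;+\;
\sup_{\phi\in\mathfrak{F}}\bigl|\mathcal Z^*_n(\phi)-\mathcal Z(\phi)\bigr|
\]
and treat the two terms with different tools. The first term compares the evaluation of $\phi$ at the pseudo-observations and at the true (unobserved) uniform vectors $\mathbf U_i$, so it is directly controlled by Lemma~\ref{lemma_cpobs}, which under Assumption~\ref{a:pseudo} and Condition~\ref{cond:2} gives a bound of order $O_P(\varepsilon_n)$.

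The main work is therefore on the second, genuinely empirical-process term $\sup_{\phi\in\mathfrak{F}}|\mathcal Z^*_n(\phi)-\mathcal Z(\phi)|$, for which the summands $\phi(\mathbf U_i;\mathbf X_i)$ are i.i.d.\ with common mean $\mathcal Z(\phi)$. I plan to apply the standard bracketing maximal inequality (e.g.\ van der Vaart--Wellner, Theorem~2.14.2 / Corollary~19.35 in van der Vaart): for a class with square-integrable envelope $\Phi$,
\[
\mathbb E\!\left[\sup_{\phi\in\mathfrak{F}}\bigl|\mathcal Z^*_n(\phi)-\mathcal Z(\phi)\bigr|\right]
\;\lesssim\;
\frac{1}{\sqrt n}\,J_{[\,]}\bigl(\|\Phi\|_2,\mathfrak{F},L^2(\mathbb P)\bigr),
\qquad
J_{[\,]}(\delta,\mathfrak{F}) = \int_0^\delta \sqrt{\log \mathcal{N}(\varepsilon,\mathfrak{F})}\,\mathrm d\varepsilon.
\]
From Lemma~\ref{lemma_GC}, $\log \mathcal N(\varepsilon,\mathfrak F)\le K\bigl(\tfrac{m}{2}\log K+\log C_1+m\log\|\Phi\|_2+m\log(1/\varepsilon)\bigr)$, so
\[
\sqrt{\log \mathcal N(\varepsilon,\mathfrak F)}\;\le\;\sqrt{K}\,\bigl(c_1\sqrt{\log K}+c_2\sqrt{\log(1/\varepsilon)\vee 1}\bigr),
\]
for constants $c_1,c_2$ depending only on $m$, $\Theta$ and $\|\Phi\|_2$. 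Integrating over $\varepsilon\in(0,\|\Phi\|_2]$ and using that $\int_0^c \sqrt{\log(1/\varepsilon)\vee 1}\,\mathrm d\varepsilon<\infty$, the bracketing integral is of order $\sqrt{K\log K}$, which yields
\[
\sup_{\phi\in\mathfrak{F}}\bigl|\mathcal Z^*_n(\phi)-\mathcal Z(\phi)\bigr|=O_P\!\left(\sqrt{K\log K/n}\right).
\]

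Combining the two bounds gives the claimed rate. The main obstacle is really just the bookkeeping in the bracketing integral: one has to verify that the $K^{m/2}$ factor inside the log becomes the innocuous $\sqrt{K\log K}$ term on the outside after taking the square root and integrating, and that the $-m\log\varepsilon$ contribution is integrable near $0$ so that it produces no additional $K$-dependence. A secondary point to be careful about is the passage from the expectation bound to an $O_P$ statement, which follows from Markov's inequality (or directly from the in-probability version of the maximal inequality). Since the envelope $\Phi$ in Condition~\ref{cond:1} is square-integrable by construction, no extra truncation argument is needed.
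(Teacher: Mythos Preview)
Your proposal is correct and follows essentially the same route as the paper: the same triangle-inequality split, Lemma~\ref{lemma_cpobs} for the pseudo-observation term, and the bracketing maximal inequality (Corollary~19.35 in van der Vaart) combined with Lemma~\ref{lemma_GC} to obtain the $\sqrt{K\log K/n}$ rate for the empirical-process term. Your treatment is in fact slightly more explicit than the paper's in handling the bracketing integral and the passage from the expectation bound to an $O_P$ statement.
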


\begin{proof}
Writing
\[
\sup_{\phi\in \mathfrak{F}}\left|\widehat{\mathcal{Z}}_n(\phi)-\mathcal{Z}(\phi)\right| \leq \sup_{\phi\in \mathfrak{F}} \left|\widehat{\mathcal{Z}}_n(\phi)-\mathcal{Z}_n^*(\phi)\right| + \sup_{\phi\in \mathfrak{F}} \left|\mathcal{Z}_n^*(\phi) - \mathcal Z(\phi)\right|
\]
For the first term, from Lemma~\ref{lemma_cpobs}, 
\[
\sup_{\phi\in \mathfrak{F}}\left|\widehat{\mathcal{Z}}_n(\phi)-\mathcal{Z}_n^*(\phi)\right|=O_P\left(\varepsilon_n\right).
\]
For the second term, introduce for $\delta>0$, 
$J(\delta,\mathfrak{F})= \int_0^{\delta} \sqrt{\log \mathcal{N}(\varepsilon,\mathfrak{F})}\mathrm{d}\varepsilon.$
From \citep[Corollary 19.35][]{vandervaart},
\[
\sqrt{n}\mathbb{E}\left[\sup_{\phi\in \mathfrak{F}}\left|\mathcal{Z}^*_n(\phi)-\mathcal{Z}(\phi)\right|\right]\leq A_3 J(\|\Phi\|_2 ,\mathfrak{F}),
\]
for some universal constant $A_3\geq 0.$ Then, from Lemma~\ref{lemma_GC},
\[
J(\|\Phi\|_2,\mathfrak{F})\leq \int_0^{\|\Phi\|_2} K^{1/2}\left\{\frac{m}{2}\log K+\log (C_1\|\Phi\|_2^m)+m \log \left(\frac{1}{\varepsilon}\right)\right\}^{1/2}d\varepsilon.
\]
Hence,
\[
\sqrt{n}\mathbb{E}\left[\sup_{\phi\in \mathfrak{F}}\left|\mathcal{Z}^*_n(\phi)-\mathcal{Z}(\phi)\right|\right]\leq C_2(m,\Theta)\sqrt{K\log K}.
\]

\end{proof}

\subsection{Proof of Proposition~\ref{prop_consistency}}

We are now ready to prove Proposition~\ref{prop_consistency}. 
 \label{result_consistency}

Recall that
\[
\|\widehat{\bm\theta}(\cdot|\mathbb{T})-\bm\theta^0(\cdot|\mathbb{T})\|_1=\sum_{\ell=1}^K |\widehat{\bm\theta}_\ell-\bm\theta^*_\ell| \mathbb{P}(\mathbf{X}\in \mathcal{T}_\ell),
\]
so that it suffices to show that
\begin{equation}
\label{eq_cons}
\sup_{\ell=1,\ldots,K}|\widehat{\bm\theta}_\ell-\bm\theta^0_\ell|=o_P(1).
\end{equation}

Let 
\begin{eqnarray*}
\widehat{\mathcal{L}}_n (\theta_1,\ldots,\theta_K) &=& \frac{1}{n}\sum_{i=1}^n \sum_{\ell=1}^K\log c_{\theta_\ell}(\widehat{\mathbf{U}}_i)\mathbf{1}_{\mathbf{X}_i\in \mathcal{T}_\ell}, \\
\mathcal{L}^*_n (\theta_1,\ldots,\theta_K) &=& \frac{1}{n}\sum_{i=1}^n \sum_{\ell=1}^K\log c_{\theta_\ell}(\mathbf{U}_i)\mathbf{1}_{\mathbf{X}_i\in \mathcal{T}_\ell}, \\
\mathcal{L} (\theta_1,\ldots,\theta_K) &=& \mathbb{E}\left[ \sum_{\ell=1}^K \log c_{\theta_\ell}(\mathbf{U}_i)\mathbf{1}_{\mathbf{X}_i\in \mathcal{T}_\ell}\right].
\end{eqnarray*}

From \citep[][Corollary 3.2.3]{vandervaartwellner}, \eqref{eq_cons} holds if
\[
\sup_{\theta_1,\ldots,\theta_\ell} |\widehat{\mathcal{L}}_n (\theta_1,\ldots,\theta_\ell) - \mathcal{L} (\theta_1,\ldots,\theta_\ell)|=o_P(1).
\]
Let us introduce
\begin{equation}
\label{class_F}
\mathfrak{F}_1=\left\{(\mathbf{u},\mathbf{x})\mapsto \sum_{\ell=1}^K\log c_{\bm{\theta}}(\mathbf{u})\mathbf{1}_{\mathbf{x}\in \mathcal{T}_\ell} \text{ with } \bm{\theta}=(\theta_\ell)_{\ell=1,\ldots,K}\in \Theta^K \right\}.
\end{equation}
From Assumption~\ref{assum:llk}, Proposition~\ref{p1} applies to $\mathfrak{F}_1,$ leading to 
\[
\sup_{\theta_1,\ldots,\theta_\ell} |\widehat{\mathcal{L}}_n (\theta_1,\ldots,\theta_\ell) - \mathcal{L} (\theta_1,\ldots,\theta_\ell)|=\sup_{\phi\in \mathfrak{F}_1}\left|\mathcal{Z}_n^*(\phi)-\mathcal{Z}(\phi)\right| = O_P\left(\sqrt{\frac{K\log K}{n}}+\varepsilon_n\right)
\] 
which tends to zero under the condition on $K$ and the result follows.

\subsection{Proof of Theorem~\ref{th_main}}
\label{proof_th_main}

Introduce 
\begin{eqnarray*}
\dot{\mathcal{L}}_{n}(\theta_1,\ldots,\theta_K) &=& \frac{1}{n}\sum_{\ell=1}^{K}\sum_{i=1}^n \nabla_{\bm\theta}\log c_{\theta_\ell}(\widehat{\mathbf{U}}_i)\mathbf{1}_{\mathbf{X}_i\in \mathcal{T}_\ell}, \\
\dot{\mathcal{L}}(\theta_1,\ldots,\theta_K) &=& \sum_{\ell=1}^K  \mathbb{E}\left[\nabla_{\bm\theta}\log c_{\theta_\ell}(\mathbf{U})\mathbf{1}_{\mathbf{X}\in \mathcal{T}_\ell}\right],
\end{eqnarray*}
and
\[
\mathfrak{F}_2=\left\{(\mathbf{u},\mathbf{x})\rightarrow \sum_{\ell=1}^K \nabla_{\bm\theta}\log c_{\theta_\ell}(\mathbf{u})\mathbf{1}_{\mathbf{x}\in \mathcal{T}_\ell}:(\theta_\ell)_{l=1,\ldots,K}\in \Theta^K\right\}.
\]

From Proposition~\ref{p1},
\begin{equation}\label{eq:rate}\sup_{\theta_1,\ldots,\theta_\ell} \left|\dot{\mathcal{L}}_{n}(\theta_1,\ldots,\theta_K)-\dot{\mathcal{L}}(\theta_1,\ldots,\theta_K)\right|=O_P\left(\frac{[K\log K]^{1/2}}{n^{1/2}}+\varepsilon_n\right).
\end{equation}
Then, write
\begin{eqnarray*}
\dot{\mathcal{L}}(\theta^0_1,\ldots,\theta^0_K)-\dot{\mathcal{L}}(\widehat{\theta}_1,\ldots,\widehat{\theta}_K)&=&\left\{\dot{\mathcal{L}}(\theta^0_1,\ldots,\theta^0_K)-\dot{\mathcal{L}_n}(\theta^0_1,\ldots,\theta^0_K)\right\}\\
&&+\left\{\dot{\mathcal{L}}_n(\theta^0_1,\ldots,\theta^0_K)-\dot{\mathcal{L}_n}(\widehat{\theta}_1,\ldots,\widehat{\theta}_K)\right\} \\
&& +\left\{\dot{\mathcal{L}}_n(\widehat{\theta}_1,\ldots,\widehat{\theta}_K)-\dot{\mathcal{L}}(\widehat{\theta}_1,\ldots,\widehat{\theta}_K)\right\}.
\end{eqnarray*}
The rate of the first and last brackets in this decomposition are given by \eqref{eq:rate}, while the middle one is
\[
\left\{\dot{\mathcal{L}}_n(\theta^0_1,\ldots,\theta^0_K)-\dot{\mathcal{L}_n}(\widehat{\theta}_1,\ldots,\widehat{\theta}_K)\right\}=\dot{\mathcal{L}}_n(\theta^0_1,\ldots,\theta^0_K),
\]
has also the same rate. This shows that
\[
\left|\dot{\mathcal{L}}(\theta^0_1,\ldots,\theta^0_K)-\dot{\mathcal{L}}(\widehat{\theta}_1,\ldots,\widehat{\theta}_K)\right|= O_P\left(\frac{[K\log K]^{1/2}}{n^{1/2}}+\varepsilon_n\right).
\]

From the assumption on the Hessian matrix, and since Proposition~\ref{prop_consistency} applies (which guarantees that each $\widehat{\theta}_\ell$ is in an arbitrary small neighborhood of $\theta^*_\ell$ for $n$ large enough), we get
\[
\left|\dot{\mathcal{L}}(\theta^0_1,\ldots,\theta^0_K)-\dot{\mathcal{L}}(\widehat{\theta}_1,\ldots,\widehat{\theta}_K)\right|\geq \mathfrak{a}\|\widehat{\theta}-\theta^0\|_1,
\]
for some $\mathfrak{a}>0,$ from a Taylor expansion, and the result follows.

\subsection{Proof of Theorem~\ref{th_main2}}
\label{proof_oracle}

Let $\widehat{\bm\theta}^K$ denote the best tree with $K$ leaves, with respect to the log-likelihood (and $\bm\theta^{0,K}$ its corresponding limit), and $\widehat{K}$ denote the number of leaves of $\bar{\bm\theta}.$

Write
\[
\bar{\bm\theta}-\bm\theta^0=[\widehat{\bm\theta}^{K^0}-\bm\theta^0]\mathbf{1}_{\widehat{K}=K^0}+\sum_{K\neq K^0} \left[\widehat{\bm\theta}^K-\bm\theta^{0}\right]\mathbf{1}_{\widehat{K}=K}.
\]
Let $R=\sum_{K\neq K^0} \left[\widehat{\bm\theta}^K-\bm\theta^{0,K}\right]\mathbf{1}_{\widehat{K}=K},$ and note that
\[
\mathbb{P}(R\geq t)\leq \mathbb{P}(\widehat{K}>K^0)+\mathbb{P}(\widehat{K}<K^0).
\]

The result is then shown if we prove that $\mathbb{P}(\widehat{K}>K^0)$ and $\mathbb{P}(\widehat{K}<K^0)$ tend to zero when $n$ tends to infinity, which is done below studying each probability separately.

\paragraph{First case: $\mathbb{P}(\widehat{K}>K^0).$}

We will use the notation $\mathcal{L}_n^K$ to denote the log-likelihood associated with $\widehat{\bm\theta}^K.$ If $\widehat{K}> K^0,$ this means that there exists some $K^0<K<K_{\max}$ such that
\[
\mathcal{L}_n^K-\mathcal{L}_n^{K^0}\geq \lambda (K-K^0),
\]
that is
\[
\mathcal{L}_n^K(\widehat{\bm\theta}^K)-\mathcal{L}_n^{K}(\bm\theta^{0,K})\geq \lambda (K-K^0),
\]
since $\mathcal{L}_n^K(\bm\theta^0)=\mathcal{L}_n^{K^0}(\bm\theta^{0,K})$ for $K\geq K^0$. Whence, 
\[
\mathbb{P}(\widehat{K}>K^0)\leq \mathbb{P}(\exists K>K^0: \mathcal{L}_n^K(\widehat{\bm\theta}^K)-\mathcal{L}_n^{K}(\bm\theta^{0,K})\geq \lambda (K-K^0)).
\]
Since $\lambda (K-K^0)\geq \lambda,$ and since $\mathcal{L}_n^K(\widehat{\bm\theta}^K)-\mathcal{L}_n^{K}(\bm\theta^{0,K})\leq \mathcal{L}_n^{K_{\max}}(\widehat{\bm\theta}^{K_{\max}})-\mathcal{L}_n^{K_{\max}}(\bm\theta^{0,K_{\max}}),$
\begin{equation}
\mathbb{P}(\widehat{K}>K^0)\leq \mathbb{P}\left(\mathcal{L}_n^{K_{\max}}(\widehat{\bm\theta}^{K_{\max}})-\mathcal{L}_n^{K_{\max}}(\bm\theta^{0,K_{\max}})\geq \lambda \right). \label{eureka1}
\end{equation}
In the proof of Proposition~\ref{prop_consistency}, we showed that 
\[
\mathcal{L}_n^{K_{\max}}(\widehat{\bm\theta}^{K_{\max}})-\mathcal{L}_n^{{K}_{\max}}(\bm\theta^{0,K})=O_P([K_{\max}\log K_{\max}]^{1/2}n^{-1/2} + \varepsilon_n).
\]
Hence, the right-hand side of \eqref{eureka1} tends to zero provided that $\lambda n^{1/2}[K_{\max}\log K_{\max}]^{-1/2}\rightarrow \infty.$
\\\\
\textbf{Second case: $\mathbb{P}(\widehat{K}<K^0).$}
\\
In this case $\mathcal{L}_n^K-\mathcal{L}_n^{K^0}\leq \mathcal{L}_n^{(K^0-1)}-\mathcal{L}_n^{K^0}.$ 
From the proof of Proposition~\ref{prop_consistency}, $\mathcal{L}_n^{(K^0-1)}-\mathcal{L}^{*(K^0-1)}=O_P([K^{*}\log K^0]^{1/2}n^{-1/2}),$ and $\mathcal{L}_n^{(K^0)}-\mathcal{L}^{*(K^0)}=O_P([K^{*}\log K^0]^{1/2}n^{-1/2}).$ Then, similarly to the first case,
\[
\mathbb{P}(\widehat{K}<K^0)\leq \mathbb{P}\left(\mathcal{L}_n^{(K^0-1)}-\mathcal{L}^{*(K^0-1)}-\mathcal{L}_n^{K^0}+\mathcal{L}^{*K^0}\geq \frac{\lambda}{2}\right)+\mathbb{P}\left(\mathcal{L}^{*(K^0-1)}-\mathcal{L}^{*K^0}\geq \frac{\lambda}{2}\right).
\]
The first probability tends to zero under the same conditions as in the first case, while the second is equal to $\mathbf{1}_{\mathcal{L}^{*(K^0-1)}-\mathcal{L}^{*(K^0)}\geq \lambda/2},$ since the quantity $\mathcal{L}^{*(K^0-1)}-\mathcal{L}^{*K^0}$ is deterministic. This indicator function tends to zero when $n$ tends to infinity if $\lambda$ tends to zero.

\subsection{Convergence rate for the margins for kernel estimators}
\label{sec:margins1}

In this section, we show that Assumption~\ref{a:pseudo} holds for the kernel estimator \eqref{kernelF}.
This is in fact a consequence of Theorem 4 in \citep{einmahl2005uniform}. We show the result under three additional assumptions on the model:
\begin{enumerate}
    \item the density of $\mathbf{X}$ is bounded away from zero on $\mathcal{X},$ that is $\inf_{\mathbf{x}\in \mathcal{X}}f_{\mathbf{X}}(\mathbf{x})>0;$
    \item we have 
    \[
    \sup_{\mathbf{x\in \mathcal{X}},y}\left|\frac{F^{(j)}(y)}{F^{(j)}(y|\mathbf{x})}+\frac{1-F^{(j)}(y)}{1-F^{(j)}(y|\mathbf{x})}\right|\leq \mathfrak{a},
    \]
    for some finite constant $\mathfrak{A};$
    \item the kernel function is a continuous and bounded function, symmetric around 0, such that $\int u^2 K(u)du<\infty,$ the density $\mathbf{x}\mapsto f_{\mathbf{X}}(\mathbf{x})$ and $\mathbf{x}\mapsto F^{(j)}(t|\mathbf{x})$ are twice continuously differentiable with respect to $\mathbf{x}$, with uniformly bounded derivatives up to order 2.
\end{enumerate}
The first assumption is required to avoid the denominator, in the kernel weights, going too close to zero. The second one is a way to consider that there is some kind of uniform domination of the behavior of the conditional distributions when $\mathbf{x}$ changes. Finally, the third assumption is classical in kernel regression and will help to control the bias term involved in smoothing techniques.

Introducing the kernel estimator of the density of $\mathbf{X},$ 
\[
\widehat{f}_X(\mathbf{x})=\frac{1}{nh^d}\sum_{i=1}^n K\left(\frac{\mathbf{X}_i-\mathbf{x}}{h}\right),
\]
we can write, for $t\leq 1/2,$
\[
\widehat{f}_X(\mathbf{x})\frac{\widehat{F}^{(j)}(t|\mathbf{x})}{\left[F^{(j)}(t|\mathbf{x})(1-F^{(j)}(t|\mathbf{x}))\right]^{\alpha}}=\frac{1}{nh^p}\sum_{i=1}^n K\left(\frac{\mathbf{X}_i-\mathbf{x}}{h}\right)f_t(Y_i^{(j)}),
\]
where
\[
f_t(y)=\frac{\mathbf{1}_{y\leq t}}{\left[F^{(j)}(t|\mathbf{x})(1-F^{(j)}(t|\mathbf{x}))\right]^{\alpha}}\leq \frac{1}{[F^{(j)}(y|\mathbf{x})]^{\alpha}[1-F^{(j)}(1/2|\mathbf{x})]^{\alpha}}\leq \frac{\mathfrak{A}^{\alpha}}{[F^{(j)}(y)]^{\alpha}[1-F^{(j)}(1/2|\mathbf{x})]^{\alpha}}.
\]
Since 
\[
\mathbb{E}\left[\left(\frac{1}{[F^{(j)}(Y_i^{(j)})]^{\alpha}}\right)^p\right]<\infty,
\]
for some $p>2$ for $\alpha<1/2,$ and since the covering number of the class of functions $f_t$ is controlled  \citep[see][Example 19.12]{vandervaart}),then Theorem 4 of  \citep{einmahl2005uniform} applies, showing that
\[
\sup_{t\leq 1/2,x}\left|\frac{1}{nh^d}\sum_{i=1}^n K\left(\frac{\mathbf{X}_i-\mathbf{x}}{h}\right)f_t(Y_i^{(j)})-\mathbb{E}\left[f_t(Y_i^{(j)})K\left(\frac{\mathbf{X}_i-\mathbf{x}}{h}\right)\right]\right|=O_P\left([\log n]^{1/2}n^{-1/2}h^{-d/2}\right).
\]
Then, from a Taylor expansion and the third assumption of this section, we get
\[
\mathbb{E}\left[f_t(Y_i^{(j)})K\left(\frac{\mathbf{X}_i-\mathbf{x}}{h}\right)\right]=\mathbb{E}\left[f_t(Y_i^{(j)})|\mathbf{X}_i=\mathbf{x}\right]f_{\mathbf{X}}(\mathbf{x})+O(h^2).
\]
Let us note that this $h^2$ rate can be improved if one uses a degenerate kernel with a sufficiently high number of moments equal to zero. Then, from the rate of uniform convergence of $\widehat{f}_X(\mathbf{x})$ \citep[from Theorem 1][]{einmahl2005uniform}), we get
\[
\sup_{t\leq 1/2,\mathbf{x}}\left|\frac{\widehat{F}^{(j)}(t|\mathbf{x})-F^{(j)}(t|\mathbf{x})}{\left[F^{(j)}(t|\mathbf{x})(1-F^{(j)}(t|\mathbf{x}))\right]^{\alpha}}\right|=O_P(h^2+[\log n]^{1/2}n^{-1/2}h^{-d/2}).
\]
Studying the supremum for $t>1/2$ can be done in the same way, by studying $1-F^{(j)}$ instead of $F^{(j)}.$

\subsection{Convergence rate for the margins for discrete covariates}
\label{sec:margins2}

For discrete covariates, recall that
\[
\widehat{F}^{(j)}(t|\mathbf{x})=\frac{\sum_{i=1}^n \mathbf{1}_{Y_i^{(j)}\leq t}\mathbf{1}_{\mathbf{X}_i\in \mathcal{C}(\mathbf{x})}}{\sum_{i=1}^n \mathbf{1}_{\mathbf{X}_i\in \mathcal{C}(\mathbf{x})}}.
\]
From the central limit theorem,
\[
\frac{1}{n}\sum_{i=1}^n \mathbf{1}_{\mathbf{X}_i\in \mathcal{C}(\mathbf{x})}=\mathbb{P}\left(\mathbf{X}\in \mathcal{C}(\mathbf{x})\right)+O_P(n^{-1/2}).
\]
The upper part can be studied using similar arguments as  \citep[][Example 19.12]{vandervaart}, noticing that the class of functions $f_t(Y_i^{(j)})\mathbf{1}_{\mathbf{X}_i\in \mathcal{C}(\mathbf{x})}$ (where $f_t$ is defined in section~\ref{sec:margins1}) has a similar covering number as the class of functions $f_t.$ This leads to
\[
\sup_{t,\mathbf{x}}\left|\frac{1}{n}\sum_{i=1}^n \mathbf{1}_{Y_i^{(j)}\leq t}\mathbf{1}_{\mathbf{X}_i\in \mathcal{C}(\mathbf{x})}-F^{(j)}(t|\mathbf{x})\mathbb{P}\left(\mathbf{X}\in \mathcal{C}(\mathbf{x})\right)\right|=O_P(n^{-1/2}).\]
Then, we get
\[
\sup_{t,\mathbf{x}}\left|\frac{\widehat{F}^{(j)}(t|\mathbf{x})-F^{(j)}(t|\mathbf{x})}{\left[F^{(j)}(t|\mathbf{x})(1-F^{(j)}(t|\mathbf{x}))\right]^{\alpha}}\right|=O_P(n^{-1/2}).
\]

\subsection{Regression trees for margin estimation in the real data example}

We report here the regression trees resulting from fitting the variables $(Y^{(1)}, Y^{(1)})$ as a function of the covariates year and Influenza Transmission Zone. (see Section~\ref{sec:flunet_data}).

\begin{figure}
\centering
\includegraphics[width=0.9\textwidth]{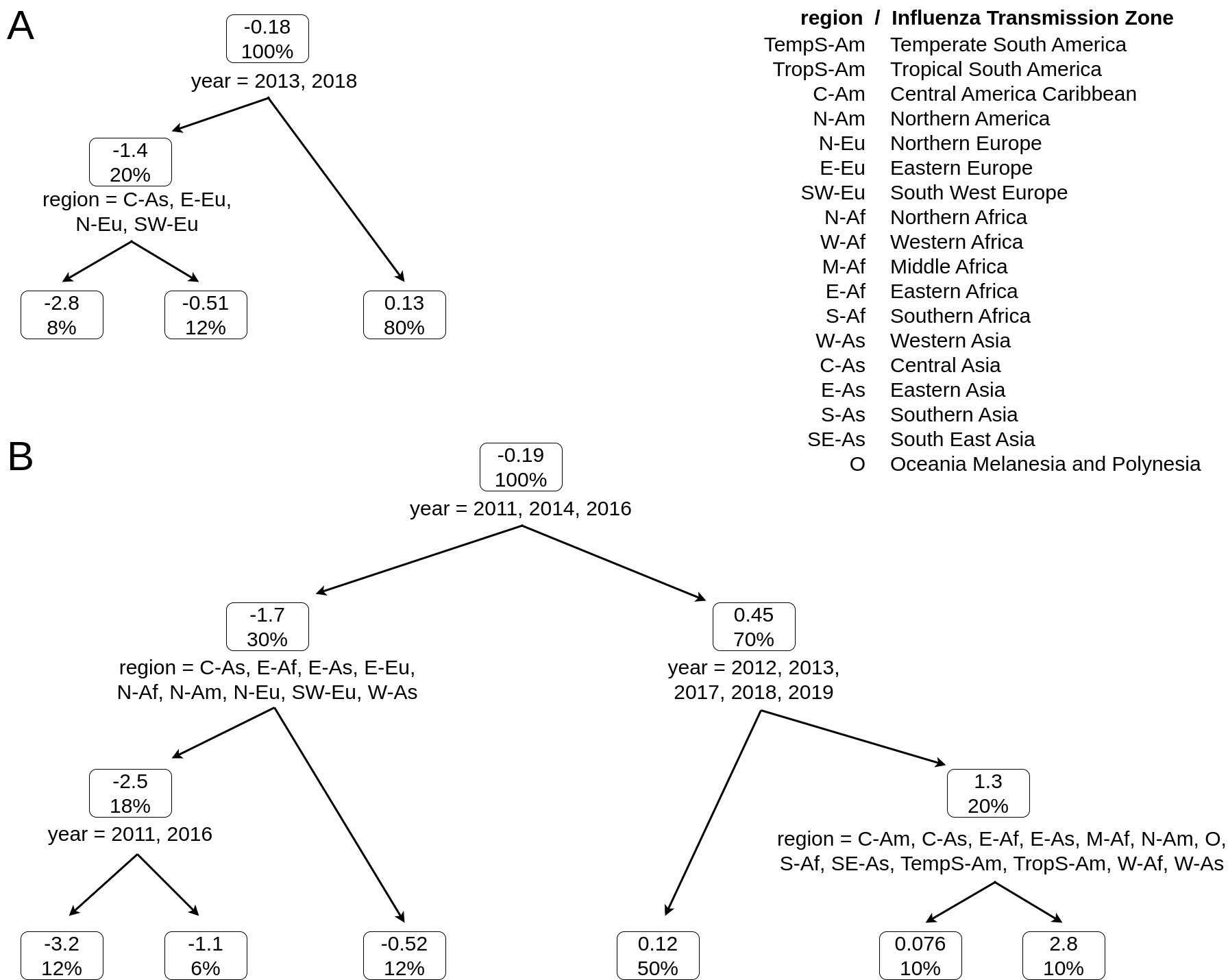}
\caption{Optimal trees for margins estimation. Country and Influenza Transmission Zones are classified by the regression trees to approximate the response variables $Y^{(1)}$ (plot A) and $Y^{(2)}$ (plot B). The coefficients of determination of the two fits are 0.29 and 0.5, respectively. For each node, the average value of the response variable and the percentage of the observations included are indicated. In the top-right corner, a legend illustrates the abbreviations used for the Influenza Transmission Zones.}
\label{fig:tree_margins}
\end{figure}


\bibliographystyle{abbrvnat} 
\bibliography{bibli}

\end{document}